\numberwithin{equation}{section}
\newtheorem{theorem}{Theorem}[section]
\newtheorem{definition}[theorem]{Definition}
\newtheorem{lemma}[theorem]{Lemma}
\newtheorem{proposition}[theorem]{Proposition}
\newtheorem{corollary}[theorem]{Corollary}
\newtheorem*{remark}{Remark}
\begin{document}
	
	\title[Global Hypoellipticity for Involutive Systems]{Global Hypoellipticity for Involutive Systems  on Non-Compact Manifolds}

	%----------Author 1
	\author[S. Coriasco]{Sandro Coriasco}
	\address{
		Dipartimento di Matematica ``Giuseppe Peano'', 
		Università Degli Studi di Torino, 
		Via Carlo Alberto 10, CAP 0123, Torino, 
		Italia }
	\email{sandro.coriasco@unito.it}
	%\thanks{}

	%----------Author 2
	\author[A. Kirilov]{Alexandre Kirilov}
	\address{
		Departamento de Matem\'atica, 
		Universidade Federal do Paran\'a,  
		Caixa Postal 19096, CEP 81530-090, Curitiba, Paran\'a, 
		Brasil}
	%\curraddr{}
	\email{akirilov@ufpr.br}
	%\thanks{}
	
	%----------Author 3
	\author[W. de Moraes]{Wagner A. A. de Moraes}
	\address{
		Departamento de Matem\'atica,  
		Universidade Federal do Paran\'a,  
		Caixa Postal 19096\\  CEP 81530-090, Curitiba, Paran\'a, 
		Brasil}
	%\curraddr{}
	\email{wagnermoraes@ufpr.br}
	%\thanks{}
	
	%----------Author 4
	\author[P. Tokoro]{Pedro M. Tokoro}
	\address{
		Programa de P\'os-Gradua\c c\~ao em Matem\'atica,  
		Universidade Federal do Paran\'a,  
		Caixa Postal 19096\\  CEP 81530-090, Curitiba, Paran\'a,  
		Brasil}
	%\curraddr{}
	\email{pedro.tokoro@ufpr.br}
	\thanks{This study was financed in part by the Coordenação de Aperfeiçoamento de Pessoal de Nível Superior - Brasil (CAPES) - Finance Code 001. The first author was also partially supported by 
		the Italian Ministry of the University and Research - MUR, within the framework of the Call relating to the scrolling of the final rankings of the 
		PRIN 2022 - Project Code 2022HCLAZ8, CUP D53C24003370006 (PI A. Palmieri, Local unit Sc. Resp. S. Coriasco). The second author was supported in part by CNPq - Brasil (grant 316850/2021-7).}

	%----------classification, keywords, date
	\subjclass{Primary 35N10, 58J10; Secondary 35B65, 58J40}
	
	\keywords{global hypoellipticity, involutive structures, scattering manifolds, differential forms}

	\begin{abstract}
		
			We study the global hypoellipticity of the operator 
			\(
			\mathbb{L} = \mathrm{d}_t + \sum_{k=1}^m \omega_k \wedge \partial_{x_k},
			\)
			defined on differential forms over product manifolds of the form $M \times \mathbb{T}^m$, where $M$ is a non-compact manifold homeomorphic to the interior of a compact manifold with boundary, equipped with a scattering metric, and $\omega_1,\dots,\omega_m$ are smooth closed 1-forms on $M$. Extending previous results obtained in the compact setting, we characterize global hypoellipticity of $\mathbb{L}$ in terms of arithmetic properties of the forms $\omega_k$. The analysis relies on microlocal techniques adapted to the scattering setting and a version of the Hodge Theorem for scattering manifolds.
		
	\end{abstract}

	\maketitle

%====================================	
%====================================	
\section{Introduction}
%====================================	
%====================================	

In this paper, we study the global hypoellipticity of a class of differential operators acting on differential forms over product manifolds of the form $M \times \mathbb{T}^m$, where $M$ is a non-compact Riemannian manifold which is homeomorphic to the interior of a compact manifold with boundary, and $\mathbb{T}^m$ is the $m$-dimensional torus. 

Scattering manifolds, introduced by Melrose \cite{Melrose_GST}, are smooth manifolds with boundary whose geometry at infinity is modeled by asymptotically conic or Euclidean structures. The metric $g$ on the interior of a compact manifold with boundary is said to be a scattering metric if, near the boundary, it takes the form
\[
g = \frac{ \mathrm{d}x^2}{x^4} + \frac{g'}{x^2},
\]
where $x$ is a boundary defining function and $g'$ is a symmetric 2-tensor restricting to a metric on the boundary. These metrics define a natural class of non-compact manifolds with well-behaved asymptotic properties, allowing for the development of analysis at infinity, including versions of the Hodge theorem and Hodge cohomology \cite{Melrose_APS,Melrose_GST}.

Let us recall the notion of global hypoellipticity for differential operators acting on sections of vector bundles over smooth manifolds:

\begin{definition}\label{GH}
	Let $E, F$ be vector bundles over a smooth manifold $M$. We say that a linear operator $P: C^\infty(M,E) \to C^\infty(M,F)$ 
	is \emph{globally hypoelliptic} if
	\[
	u \in \mathscr{D}'(M,E), \quad Pu \in C^\infty(M,F) \quad \Rightarrow \quad u \in C^\infty(M,E).
	\]
\end{definition}

Our main object of study is the first-order operator
\[
\mathbb{L}: C^\infty(M \times \mathbb{T}^m) \longrightarrow \mathsf{\Lambda}^1 C^\infty(M \times \mathbb{T}^m), \quad \mathbb{L}u = \mathrm{d}_t u + \sum_{k=1}^{m} \omega_k \wedge \partial_{x_k} u,
\]
where $\mathrm{d}_t$ denotes the exterior derivative acting on the $M$-variable, $M$ belongs to a suitable class of non-compact manifolds (see below), $\omega_1, \dots, \omega_m$ are smooth real-valued closed 1-forms on $M$, and $\mathbb{T}^m \simeq \mathbb{R}^m / 2\pi \mathbb{Z}^m$ is the $m$-dimensional torus with angular coordinates $x = (x_1, \dots, x_m)$.

Operators of this form arise naturally in the context of involutive structures, where the associated distribution is closed under the Lie bracket. Locally, they can be interpreted as systems of commuting first-order partial differential equations, with a geometric structure encoded by the 1-forms $\omega_k$. For a general framework on involutive systems and their analytic properties, we refer the reader to \cite{BCH_book,Treves}.

The global hypoellipticity of operators of this form has been previously studied in the compact setting. When $M$ is a closed manifold and $m=1$, Bergamasco, Cordaro, and Malagutti \cite{BCM1993} characterized the global hypoellipticity of $\mathbb{L}$ in terms of arithmetic properties of the 1-form $\omega$, namely whether it is integral, rational, or Liouville. Their analysis relies on classical Hodge theory on compact manifolds, especially the identification of harmonic representatives for cohomology classes. This approach was extended in \cite{ADL2023gh} to the case $m \geq 1$, through the introduction of appropriate generalizations of rational and Liouville families of closed 1-forms.

Building on these results, we provide a characterization of the global hypoellipticity of $\mathbb{L}$ when $M$ is a scattering manifold. Our strategy combines techniques from microlocal analysis on manifolds with boundary with scattering geometry and cohomology theory adapted to the non-compact setting. In particular, we make use of a version of Hodge theory developed for scattering metrics, which allows us to define harmonic representatives of cohomology classes in a suitable weighted setting.

%====================================	
%====================================	
\section{Geometric and Analytic Background}
%====================================	
%====================================	

This section provides the geometric and analytic background required for the development of our results. We begin by recalling the notion of asymptotically Euclidean manifolds and review some basic properties of elliptic complexes. We then introduce the concept of relative de Rham cohomology and specialize it to the context of asymptotically Euclidean  manifolds, where the boundary plays a central role. In the following subsection, we describe the version of Hodge theory that is suited to this class of non-compact manifolds. Finally, we present the construction of partial Fourier series on the product of an asymptotically Euclidean manifold with an $n$-dimensional torus, a tool that will be fundamental to our regularity analysis.

%====================================	
\subsection{Asymptotically Euclidean Manifolds} \
%====================================	

Let $\overline{M}$ be a smooth manifold with boundary, whose interior we denote by $M$. A \textit{boundary defining function} is a smooth map $x: \overline{M} \to [0, +\infty)$ such that $x^{-1}(0) = \partial M$ and $\mathrm{d}x_p \neq 0$ for all $p \in \partial M$. Consider $\mathbb{R}^n$ equipped with the standard Riemannian metric. Through radial compactification, we may identify $\mathbb{R}^n$ with the interior of the closed unit ball $\mathbb{B}^n$, where the boundary $\partial \mathbb{B}^n = \mathbb{S}^{n-1}$ corresponds to points at infinity. Specifically, let $\rho: \mathbb{R}^n \to \mathbb{B}^n$ be a diffeomorphism defined in polar coordinates by
\[
\rho(r, \theta) = \left(1 - \frac{1}{r}, \theta\right),
\]
outside a sufficiently small neighborhood of the origin. Let $[\cdot]: \mathbb{R}^n \to [0, +\infty)$ be a smooth function such that $[y] = |y|$ for $|y| \geq 3$. Then, the map $x: \mathbb{B}^n \to [0, +\infty)$ defined by
\[
x(\eta) = \frac{1}{[\rho^{-1}(\eta)]}
\]
is a boundary defining function. Moreover, the Riemannian metric on $\mathbb{R}^n$ induces a metric on $\mathbb{B}^n$ via radial compactification, which near the boundary takes the form
\begin{equation}\label{scattering-metric}
	g = \frac{\mathrm{d}x^2}{x^4} + \frac{g'}{x^2},
\end{equation}
where $g'$ denotes the lift of the standard metric on $\mathbb{S}^{n-1}$.

More generally, an \textit{asymptotically Euclidean manifold} is a smooth compact manifold with boundary $\overline{M}$, whose interior $M$ is endowed with a Riemannian metric $g$ that, near the boundary, assumes the form \eqref{scattering-metric}, where $x$ is a boundary defining function and $g'$ is a smooth symmetric 2-tensor field on $\overline{M}$ that restricts to a metric on $\partial M$. In this setting, we say that $g$ is a \textit{scattering metric}. For this reason, asymptotically Euclidean manifolds are also referred to as \textit{scattering manifolds}. For further details, we refer the reader to \cite{cordes, CD2021, Melrose_GST}.

A well-known subclass of scattering manifolds is formed by manifolds with cylindrical ends. Roughly speaking, a manifold with one cylindrical end is a non-compact manifold $M$ constructed as follows: starting from a smooth compact manifold without boundary $M_0$ of dimension $n$, we remove an open set whose boundary is diffeomorphic to the sphere $\mathbb{S}^{n-1}$. In a collar neighborhood of this removed region, we smoothly attach a cylindrical end diffeomorphic to $\mathbb{S}^{n-1} \times \mathbb{R}$. This construction can be adapted to produce manifolds with finitely many cylindrical ends.

Alternatively, $M$ can be identified with the interior of a compact manifold with boundary $\overline{M}$, where $\partial M$ is diffeomorphic to a finite disjoint union of spheres. The interior of this manifold can be equipped with a Riemannian metric $g$ satisfying appropriate conditions, thereby placing $M$ within the class of scattering manifolds. This identification is made precise via radial compactification, as discussed in \cite{CD2021}, and allows for the construction of a scattering metric on the interior of $\overline{M}$, as shown in \cite{Melrose_GST}.

%====================================	
\subsection{Elliptic Differential Complexes}\label{sect_ellcomp} \
%====================================		

Let $E^0, \dots, E^N$ be complex vector bundles over a Riemannian manifold $M$, and consider a sequence of first-order differential operators $P_q: C^\infty(E^q) \to C^\infty(E^{q+1})$ for $q = 0, \dots, N-1$, satisfying the condition:
\[
P_{q+1} \circ P_q = 0, \quad q = 0, \dots, N-2.
\] 

We say that the complex
\begin{equation}\label{elliptic_complex}
	0 \to C^\infty(E^0) \overset{P_0}{\longrightarrow} C^\infty(E^1) \overset{P_1}{\longrightarrow} \cdots \overset{P_{N-1}}{\longrightarrow} C^\infty(E^N) \to 0
\end{equation}
is \textit{elliptic} if, for every $t \in M$ and every $\tau \in T^*_t M \setminus \{0\}$, the sequence of finite-dimensional vector spaces and linear maps
\[
0 \to E^0_t \overset{\sigma_t(P_0)(\tau)}{\longrightarrow} E^1_t \overset{\sigma_t(P_1)(\tau)}{\longrightarrow} \cdots \overset{\sigma_t(P_{N-1})(\tau)}{\longrightarrow} E^N_t \to 0
\]
is exact, where $\sigma_t(P_q)(\tau)$ denotes the principal symbol of $P_q$ at $(t, \tau)$.

To define the formal adjoint of each operator $P_q$, we equip each vector bundle $E^q$ with a Hermitian metric $\langle \cdot, \cdot \rangle_{E^q}$, and let $\mathrm{d}V$ be the volume form induced by the Riemannian metric on $M$. For each $q = 0, \dots, N-1$, the formal adjoint $P_q^*: C^\infty(E^{q+1}) \to C^\infty(E^q)$ is defined by the relation
\[
\int_M \langle P_q u, v \rangle_{E^{q+1}} \, \mathrm{d}V = \int_M \langle u, P_q^* v \rangle_{E^q} \, \mathrm{d}V, \quad \text{for all } u \in C^\infty(E^q), \; v \in C_0^\infty(E^{q+1}).
\]

This gives rise to the adjoint complex:
\[
0 \leftarrow C^\infty(E^0) \overset{P_0^*}{\longleftarrow} C^\infty(E^1) \overset{P_1^*}{\longleftarrow} \cdots \overset{P_{N-1}^*}{\longleftarrow} C^\infty(E^N) \leftarrow 0.
\]

Associated with the complex \eqref{elliptic_complex}, we define the Laplacian at each level $q = 0, \dots, N$ by
\[
\mathcal{L}_q := P_q^* P_q + P_{q-1} P_{q-1}^*: C^\infty(E^q) \to C^\infty(E^q),
\]
where we set $P_{-1} = P_N = 0$ by convention. Each $\mathcal{L}_q$ is elliptic whenever the complex \eqref{elliptic_complex} is elliptic. For further details see \cite[Section 1.5]{Gilkey}.

\begin{proposition}\label{gh_firstlevel}
	If the complex \eqref{elliptic_complex} is elliptic, then the operator $P_0$ is globally hypoelliptic.
\end{proposition}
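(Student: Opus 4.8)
The plan is to reduce the global hypoellipticity of $P_0$ to the (local) hypoellipticity of the associated Laplacian $\mathcal{L}_0$, exploiting the fact that at the bottom level of the complex the second summand of $\mathcal{L}_q$ vanishes. Indeed, since $P_{-1} = 0$ by convention, one has $\mathcal{L}_0 = P_0^* P_0$. Because the complex \eqref{elliptic_complex} is elliptic, $\mathcal{L}_0$ is elliptic, as recalled above following \cite[Section 1.5]{Gilkey}. Concretely, exactness of the symbol sequence at $E^0_t$ forces $\sigma_t(P_0)(\tau)$ to be injective for every $\tau \neq 0$ (the incoming map is zero, so its image is trivial); consequently, for $v \in E^0_t$ one has $\langle \sigma_t(P_0)(\tau)^*\sigma_t(P_0)(\tau)\, v, v\rangle = \|\sigma_t(P_0)(\tau)\,v\|^2$, which shows that $\sigma_t(P_0)(\tau)^*\sigma_t(P_0)(\tau)$ is an injective, hence invertible, endomorphism of $E^0_t$. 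Thus $\mathcal{L}_0$ has invertible principal symbol off the zero section.

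The central step is then the following. Let $u \in \mathscr{D}'(M, E^0)$ with $P_0 u \in C^\infty(M, E^1)$. Since $P_0^*$ is a differential operator, it maps smooth sections to smooth sections, so applying it to $P_0 u$ yields
\[
\mathcal{L}_0 u = P_0^* P_0 u = P_0^*(P_0 u) \in C^\infty(M, E^0).
\]

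Finally, I would invoke the interior elliptic regularity theorem for the elliptic operator $\mathcal{L}_0$: any distribution $u$ with $\mathcal{L}_0 u \in C^\infty$ is itself smooth. Combined with the previous step, this gives $u \in C^\infty(M, E^0)$, which is exactly the assertion that $P_0$ is globally hypoelliptic in the sense of Definition \ref{GH}.

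I do not anticipate a genuine obstacle. The only point requiring care is that $M$ need not be compact, but this causes no difficulty: smoothness is a purely local property and elliptic regularity is an interior statement, so one applies the regularity conclusion in a neighborhood of each point and patches, with no global analytic input needed. The whole argument rests on the single structural observation that $\mathcal{L}_0 = P_0^* P_0$, which converts the ellipticity of the complex at level $0$ into ordinary ellipticity of a Laplace-type operator acting on sections of $E^0$.
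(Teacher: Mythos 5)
Your proof is correct and follows essentially the same route as the paper: use $\mathcal{L}_0 = P_0^* P_0$ to conclude $\mathcal{L}_0 u = P_0^*(P_0 u) \in C^\infty(M,E^0)$, then apply elliptic regularity for $\mathcal{L}_0$. The extra details you supply (the symbol computation showing $\mathcal{L}_0$ is elliptic, and the remark that elliptic regularity is local so non-compactness of $M$ is harmless) are points the paper delegates to \cite[Section 1.5]{Gilkey} or leaves implicit, but they are consistent with its argument.
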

\begin{proof}
	Let $u \in \mathscr{D}'(E^0)$ be a distributional section such that $P_0 u \in C^\infty(E^1)$. Since for $q = 0$ we have $\mathcal{L}_0 = P_0^* P_0$, it follows that
	\[
	\mathcal{L}_0 u = P_0^* P_0 u \in C^\infty(E^0).
	\]
	By ellipticity of $\mathcal{L}_0$, we conclude that $u \in C^\infty(E^0)$.
\end{proof}

%====================================	
\subsection{Relative Cohomology on Asymptotically Euclidean Manifolds}\label{ssec_rel_coh} \
%====================================	

	Let $M$ be a smooth manifold, $S \subset M$ a submanifold, and $i: S \to M$ the inclusion map. Define the complex $(A^\bullet(M, S), \mathrm{d})$ by setting
	\[
	A^k(M, S) = \mathsf{\Lambda}^k C^\infty(M) \oplus \mathsf{\Lambda}^{k-1} C^\infty(S),
	\]
	with differential
	\[
	\mathrm{d}(\omega, \theta) = (\mathrm{d}\omega, i^*\omega - \mathrm{d}\theta).
	\]
	It is easy to verify that $\mathrm{d}^2 = 0$. Note that exact elements in this complex correspond to closed forms on $M$ whose pullbacks to $S$ are exact.
	
	Consider the maps
	\[
	\alpha: \theta \in \mathsf{\Lambda}^{k-1} C^\infty(S) \mapsto (0, \theta) \in A^k(M, S),
	\]
	and
	\[
	\beta: (\omega, \theta) \in A^k(M, S) \mapsto \omega \in \mathsf{\Lambda}^k C^\infty(M).
	\]
	Then, for each $k$, we have a short exact sequence
	\[
	0 \to \mathsf{\Lambda}^{k-1} C^\infty(S) \xrightarrow{\alpha} A^k(M, S) \xrightarrow{\beta} \mathsf{\Lambda}^k C^\infty(M) \to 0,
	\]
	which induces the long exact sequence in cohomology:
	\[
	\cdots \to H^k(M, S) \xrightarrow{\beta^*} H^k(M) \xrightarrow{i^*} H^k(S) \xrightarrow{\alpha^*} H^{k+1}(M, S) \to \cdots.
	\]

	For further details, we refer the reader to the classical treatment by Bott and Tu \cite{bott_tu}.

	Now suppose that $\overline{M}$ is a scattering manifold and $S = \partial M$. From the exactness of the sequence, we obtain the decompositions
	\[
	H^k(\overline{M}) \simeq \operatorname{Im}(\beta^*) \oplus \operatorname{Im}(i^*) \quad \text{and} \quad H^k(\overline{M}, \partial M) \simeq \operatorname{Im}(\alpha^*) \oplus \operatorname{Im}(\beta^*).
	\]
	Since $\partial M$ is a smooth compact manifold without boundary, both $\operatorname{Im}(i^*)$ and $\operatorname{Im}(\alpha^*)$ are finite-dimensional. Therefore, $\dim H^k(\overline{M})$ is finite if and only if $\dim H^k(\overline{M}, \partial M)$ is finite.
	
	Let $g$ be a scattering metric in the interior $M$ of a compact manifold with boundary $\overline{M}$ with boundary defining function $x$. Let $\mathsf{\Lambda}^k C^\infty(M)$ denote the space of smooth $k$-forms on $M$, and $\mathsf{\Lambda}^k C_c^\infty(M)$ the space of smooth $k$-forms with compact support on $M$. In a collar neighborhood $V$ of $\partial M$, any smooth $k$-form $\omega$ can be uniquely written as
	\[
	\omega|_V = \alpha + \mathrm{d}x \wedge \beta,
	\]
	where $\alpha$ and $\beta$ are forms on $M$ that are $x$-dependent but do not involve $\mathrm{d}x$. In local coordinates $(y_1, \dots, y_{n-1}, x)$ near the boundary, we may view $\alpha$ and $\beta$ as differential forms involving only $dy_j$ for $j = 1, \dots, n-1$. Clearly, if $\omega$ is a $k$-form, then $\alpha$ is a $k$-form and $\beta$ is a $(k-1)$-form. We call $\alpha$ the tangential part and $\beta$ the conormal part of $\omega$ in $V$. We refer to \cite{Melrose_SST, shapiro} for further details.
	
	As shown in \cite{Melrose_GST, shapiro}, the relative cohomology $H^\bullet(\overline{M}, \partial M)$ is naturally isomorphic to the cohomology of the compactly supported De Rham complex $(\mathsf{\Lambda}^\bullet C_c^\infty(M^\circ), \mathrm{d})$. Moreover, consider the complex $(\mathsf{\Lambda}^\bullet A(M), \mathrm{d})$, where each $\mathsf{\Lambda}^kA(M)$ is the space of the $\omega\in\mathsf{\Lambda}^kC^\infty(M)$ such that, in a neighbouhood $V\simeq (0,1)\times\partial M$ of the boundary, we can write
	\[\omega|_V = \omega_1 + \omega_2\wedge\mathrm{d}x,\]
	where we can regard the tangential and conormal parts as $\omega_1\in C^\infty((0,1),\mathsf{\Lambda}^kC^\infty(\partial M))$ and $\omega_2\in C^\infty((0,1),\mathsf{\Lambda}^{k-1}C^\infty(\partial M))$ satisfying
	\[\lim_{x\to 0}\|\omega_1(x)\|_{L^2(\partial M)} = \lim_{x\to 0}\|\mathrm{d}_{\partial M}\omega_1(x)\|_{L^2(\partial M)}=0,\]
	where $\mathrm{d}_{\partial M}$ is the exterior derivative on $\partial M$, and there exist $C,N>0$ such that
	\[\sup_{x\in(0,1)}\sup_{p\in\partial M}\|\omega_2(x,p)\|_g\leq Cx^N, \quad \sup_{x\in(0,1)}\sup_{p\in\partial M}\|\mathrm{d}_{\partial M}\omega_2(x,p)\|_g\leq Cx^N,\]
	and
	\[\sup_{x\in(0,1)}\sup_{p\in\partial M}\left\|\dfrac{\omega_2(x,p)}{\partial x}\right\|_g\leq Cx^N.\]
	
	Now, consider the complex $(\mathsf{\Lambda}^\bullet B(M), \mathrm{d})$ consisting of smooth forms on $\mathsf{\Lambda}^\bullet A(M)$ such that $\omega_1\in \dot{C}^\infty((0,1),\mathsf{\Lambda}^\bullet C^\infty(\partial M))$, that is,
	\[\lim_{x\to 0}\left\|x^{-i}\frac{\partial^j\omega_1(x)}{\partial x^j}\right\|_{\mathscr{H}^\ell(\partial M)}=0,\quad\forall i,j,\ell\in\mathbb{N}_0,\]
	where $\mathscr{H}^\ell(\partial M)$ is the usual $\ell$-th Sobolev space on $\partial M$. In other words, $(\mathsf{\Lambda}^\bullet B(M), \mathrm{d})$ is the complex of smooth forms on $M$ whose tangential part decays rapidly and whose conormal part grows at most polynomially near the boundary. Then the inclusions
	\[\mathsf{\Lambda}^\bullet C_c^\infty(M) \hookrightarrow \mathsf{\Lambda}^\bullet B(M) \hookrightarrow \mathsf{\Lambda}^\bullet A(M)\]
	induces isomorphisms in cohomology (see \cite{shapiro}). The validity of this extension to forms with prescribed growth is established in \cite[Proposition 6.13]{Melrose_APS}. Also, Shapiro \cite[Section 5.2]{shapiro} shows that every compactly supported closed 1-form differs from an exact element of $\mathsf{\Lambda}^1B(M)$ by a square-integrable harmonic 1-form.   
	
	\begin{remark}
		Using the Mayer–Vietoris sequence, one can show that the de Rham cohomology of a scattering manifold is finite-dimensional. Indeed, let $2M$ denote the double of $M$, obtained by gluing two copies of $M$ along their common boundary. Then, $2M$ can be covered by two open sets $U, V \simeq M$ with $U \cap V \simeq (0,1) \times \partial M$, which gives us an exact sequence
		\[
		\cdots \to H^k(2M) \to H^k(M) \oplus H^k(M) \to H^k(\partial M) \to H^{k+1}(2M) \to \cdots
		\]
		Observe that $H^k((0,1) \times \partial M) \simeq H^k(\partial M)$ and that both $H^k(\partial M)$ and $H^k(2M)$ are finite-dimensional, as $\partial M$ and $2M$ are compact manifolds without boundary. Therefore, $\dim H^k(M) < \infty$, and the same holds for the relative cohomology groups.
	\end{remark}

%====================================	
\subsection{Hodge Theory on Asymptotically Euclidean Manifolds} \
%====================================	

	Let $\star$ denote the Hodge star operator associated with the metric $g$. We define $\mathsf{\Lambda}^k L^2(M)$ as the space of (not necessarily smooth) sections $\alpha \in \mathsf{\Lambda}^k T^*M$ such that
	\[
	\int_M \alpha \wedge (\star \alpha) < +\infty,
	\]
	equipped with the inner product
	\[
	\langle \alpha, \beta \rangle = \int_M \alpha \wedge (\star \beta),
	\]
	which turns $\mathsf{\Lambda}^k L^2(M)$ into a Hilbert space. The exterior derivative can be extended to this space in the same way as weak derivatives are defined for $L^2$ functions.

	For each $k \in \mathbb{N}_0$, the Laplacian $\Delta$ on $k$-forms is defined as the Laplacian associated to the De Rham complex
	\[0\to C^\infty(M) \overset{\mathrm{d}}{\longrightarrow} \mathsf{\Lambda}^1C^\infty(M) \overset{\mathrm{d}}{\longrightarrow} \cdots \overset{\mathrm{d}}{\longrightarrow} \mathsf{\Lambda}^nC^\infty(M)\to 0,\]
	as in Subsection \ref{sect_ellcomp}. The space of square-integrable harmonic $k$-forms is defined as
	\[
	\mathcal{H}^k(M) = \{u\in\mathsf{\Lambda}^kL^2(M)\,:\, \Delta u=0\}.
	\]
		
	If $M$ is a smooth closed manifold endowed with a Riemannian metric $g$, the Hodge Theorem establishes an isomorphism between the space $\mathcal{H}^k(M)$, and the de Rham cohomology group $H^k(M)$. In general, there is no analogue of the Hodge Theorem for non-compact manifolds. However, a version of the theorem does hold in the setting of asymptotically Euclidean manifolds:
	
	\begin{theorem}[Hodge Theorem for Asymptotically Euclidean Manifolds]
		Let $M$ be an asymptotically Euclidean manifold. Then the space $\mathcal{H}^k(M)$ of harmonic $k$-forms is naturally isomorphic to the image of the inclusion map $i: H^k(\overline{M}, \partial M) \to H^k(\overline{M})$, given by $i([\omega]) = [\omega]$.
	\end{theorem}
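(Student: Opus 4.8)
The plan is to establish the Hodge-theoretic isomorphism by combining an $L^2$-decomposition of forms on the scattering manifold with the relative cohomology machinery already set up in the excerpt. The statement asserts that $\mathcal{H}^k(M)$ is isomorphic to $\operatorname{Im}(i)$, where $i\colon H^k(\overline M,\partial M)\to H^k(\overline M)$ is the natural map; note that in the long exact sequence of Subsection \ref{ssec_rel_coh} this image is exactly $\operatorname{Im}(\beta^*)$, the part of the absolute cohomology represented by classes that extend to compactly supported (or rapidly decaying) closed forms. The construction of a natural map should proceed as follows.

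\emph{Step 1 (From harmonic forms to cohomology classes).} First I would check that every $u\in\mathcal{H}^k(M)$ is \emph{closed}. The key analytic input is the $L^2$-Hodge identity: for forms in the natural $L^2$ domain one has $\langle\Delta u,u\rangle=\|\mathrm{d}u\|^2+\|\mathrm{d}^*u\|^2$, which forces $\mathrm{d}u=\mathrm{d}^*u=0$ whenever $\Delta u=0$. Making this integration by parts rigorous on the non-compact $M$ is where the scattering structure enters: one must verify that the boundary contributions vanish, using the decay properties encoded in the spaces $\mathsf{\Lambda}^\bullet B(M)$ and the growth estimates recalled in the excerpt (square-integrable harmonic forms have controlled behaviour near $x=0$). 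Granting closedness, $u$ defines a de Rham class $[u]\in H^k(\overline M)$, and one must show this class lies in $\operatorname{Im}(i)=\operatorname{Im}(\beta^*)$. This is precisely the content cited from Shapiro \cite[Section 5.2]{shapiro}: an $L^2$-harmonic $1$-form (and, by the same argument, a harmonic $k$-form) represents a class carried by an element of the subcomplex of compactly-supported-modulo-exact forms, i.e.\ by a class in the image of the inclusion-induced map from relative cohomology. Thus $u\mapsto[u]$ defines a linear map $\Phi\colon\mathcal{H}^k(M)\to\operatorname{Im}(i)$.

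\emph{Step 2 (Injectivity).} Suppose $u\in\mathcal{H}^k(M)$ with $[u]=0$ in $H^k(\overline M)$, so $u=\mathrm{d}v$ for some form $v$. Since $u$ is simultaneously closed and coclosed, I would pair $u$ against itself: $\|u\|^2=\langle u,\mathrm{d}v\rangle=\langle\mathrm{d}^*u,v\rangle=0$, again provided the integration by parts is justified (the boundary term vanishes because $v$ can be taken in the admissible weighted class and $u$ decays). Hence $u=0$, giving injectivity of $\Phi$.

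\emph{Step 3 (Surjectivity).} Conversely, given a class in $\operatorname{Im}(i)=\operatorname{Im}(\beta^*)$, it is represented by a closed form $\eta\in\mathsf{\Lambda}^kB(M)$ (rapidly decaying tangential part, polynomially bounded conormal part). I would then invoke the weighted $L^2$-orthogonal decomposition supplied by the scattering Hodge theory of \cite{Melrose_APS,Melrose_GST}: such a form decomposes as $\eta=u+\mathrm{d}\phi+\mathrm{d}^*\psi$ with $u$ the $L^2$-harmonic part, and closedness plus the coexactness of the last summand forces $\mathrm{d}^*\psi=0$ after the usual pairing argument, so $\eta$ and $u$ differ by an exact (admissible) form. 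This is exactly the assertion quoted in the excerpt that every compactly supported closed form differs from an exact element of $\mathsf{\Lambda}^\bullet B(M)$ by a square-integrable harmonic form. Therefore $\Phi(u)=[\eta]$, proving surjectivity. Naturality of the isomorphism follows because each step is defined intrinsically through the metric-independent de Rham class.

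The main obstacle I anticipate is \emph{Step 1}, specifically justifying the integration-by-parts identities and the claim that $L^2$-harmonic forms land in $\operatorname{Im}(i)$ rather than in all of $H^k(\overline M)$. On a non-compact manifold Stokes' theorem acquires boundary-at-infinity terms, and controlling them requires the precise decay/growth dichotomy between tangential and conormal parts built into $\mathsf{\Lambda}^\bullet B(M)$; the asymmetry of the scattering metric \eqref{scattering-metric} (the $\mathrm{d}x^2/x^4$ versus $g'/x^2$ weighting) is what makes the harmonic $L^2$ forms detect exactly the image of relative cohomology and not the full absolute cohomology. Once the results of Melrose \cite{Melrose_APS} and Shapiro \cite{shapiro} are in hand, these boundary terms are exactly the ones shown to vanish, and the three steps assemble into the desired natural isomorphism.
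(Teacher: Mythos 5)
First, a point of comparison: the paper itself does not prove this theorem. It is quoted from the literature --- the statement is cited as \cite[Theorem 6.2]{Melrose_GST}, with the detailed proof attributed to \cite[Section 6.4]{Melrose_APS} and the foundational results of \cite{APS1975}. So your sketch must be measured against those proofs, and at the level of architecture it reproduces the standard skeleton correctly: harmonic $L^2$ forms are closed and coclosed (this part is genuinely fine, since a scattering metric is complete and Gaffney's cut-off argument applies), they map to cohomology classes, injectivity is attacked by pairing, and surjectivity by a Hodge--Kodaira decomposition.

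The trouble is that the steps you flag as ``to be justified'' are not routine boundary-term estimates; they are the entire content of the theorem, and as written they fail. In Step 2, if $[u]=0$ in $H^k(\overline M)$ then $u=\mathrm{d}v$ with $v$ smooth up to the boundary of $\overline M$, and such a $v$ has \emph{no} decay --- it is merely bounded in the relevant sense --- so the identity $\langle u,\mathrm{d}v\rangle=\langle\mathrm{d}^*u,v\rangle$ requires knowing that the $L^2$ harmonic form $u$ itself decays fast enough at $x=0$ to annihilate the boundary contribution. Establishing that elements of $\mathcal H^k(M)$ have such decay (in fact full asymptotic expansions at the boundary) is precisely the hard analytic core of Melrose's proof, carried out with the scattering pseudodifferential calculus; it cannot be extracted from elementary $L^2$ identities, and your proposal assumes it. In Step 3, the Kodaira decomposition on a complete manifold is $\mathsf{\Lambda}^kL^2(M)=\mathcal H^k(M)\oplus\overline{\mathrm{d}\,\mathsf{\Lambda}^{k-1}C_c^\infty(M)}\oplus\overline{\mathrm{d}^*\mathsf{\Lambda}^{k+1}C_c^\infty(M)}$, with \emph{closures}: to conclude that $\eta-u$ is genuinely exact by an admissible form (so that the de Rham classes agree) you need the range of $\mathrm{d}$ to be closed in the appropriate weighted spaces, which is again a Fredholm-type statement proved in the references, not a consequence of the abstract decomposition. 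Worse, your representative $\eta\in\mathsf{\Lambda}^kB(M)$ need not be square-integrable at all (its conormal part is only polynomially bounded), so the $L^2$ decomposition cannot even be applied to it directly; and the statement you quote from Shapiro to bridge this is proved there only for $1$-forms, so invoking it ``by the same argument'' for general $k$ is an unjustified leap. In sum, your outline is the right skeleton, but every load-bearing step is deferred to the very results being proved: as a proof it is a reduction of the theorem to the theorem, whereas the paper honestly presents it as a cited result.
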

	
	This result is stated in \cite[Theorem 6.2]{Melrose_GST}. Here, $H^k(\overline{M})$ is the cohomology of forms that are smooth up to the boundary. According to \cite[Proposition 7.4]{Melrose_APS}, the Hodge cohomology for a scattering metric coincides with that for a $b$-metric. A $b$-metric is a Riemannian metric on the interior of a compact manifold with boundary that takes the form
	\[
	g=\frac{\mathrm{d}x^2}{x^2} + g'
	\]
	in a collar neighborhood of the boundary, where $x$ is a boundary defining function and $g'$ restricts to a metric on $\partial M$. The detailed proof of the Hodge Theorem in this setting appears in \cite[Section 6.4]{Melrose_APS}, building upon foundational results of Atiyah, Patodi, and Singer \cite{APS1975}.
	
	By elliptic regularity, every harmonic $k$-form is smooth. It is important to note that the inclusion map $i$ in the theorem above is, in general, neither injective nor surjective. From now on, we denote
	\[
	H^1_{\partial M}(M) := i(H^1(\overline{M}, \partial M)) \subset H^1(\overline{M}),
	\]
	and let $\mathsf{\Lambda}^1 C^\infty_{\partial M}(M)$ denote the space of smooth $1$-forms $\omega$ such that $[\omega] \in H^1_{\partial M}(M)$.
	
	\begin{remark}
		Notice that we can regard $H^1(\overline{M})$ as a subspace of $H^1(M)$ by restriction to the interior, since the values of a smooth differential form on $\overline{M}$ is completely determined by the values in the interior by continuity. Moreover, in view of the discussion at the end of the previous subsection, the 1-forms in $\mathsf{\Lambda}^1 C^\infty_{\partial M}(M)$ can have a certain growth at infinity.
	\end{remark}

%====================================	
\subsection{Partial Fourier Series} \
%====================================	
	
	Let $V \subset \mathbb{R}^n$ be an open subset, and let $\mathbb{T}^m = \mathbb{R}^m / 2\pi\mathbb{Z}^m$ denote the $m$-dimensional torus. For a smooth function $f \in C^\infty(V \times \mathbb{T}^m)$, the \textit{partial Fourier coefficient} of $f$ associated with $\xi \in \mathbb{Z}^m$ is defined as the function $\widehat{f}_\xi \in C^\infty(V)$ given by
	\[
	\widehat{f}_\xi(t) = \frac{1}{(2\pi)^m} \int_{\mathbb{T}^m} e^{-i \xi\cdot x} f(t, x) \, \mathrm{d}x, \quad t\in V.
	\]
	
	Similarly, for a distribution $f \in \mathscr{D}'(V \times \mathbb{T}^m)$, the partial Fourier coefficient $\widehat{f}_\xi \in \mathscr{D}'(V)$ is defined by
	\[
	\langle \widehat{f}_\xi, \phi \rangle = \langle f, \phi \otimes e^{-i \xi \cdot x} \rangle, \quad \phi \in C_c^\infty(V).
	\]
	
	We then have the following result:
	
	\begin{theorem}
		Let $V \subset \mathbb{R}^n$ be an open subset and $u \in \mathscr{D}'(V \times \mathbb{T}^m)$. Then, $u \in C^\infty(V \times \mathbb{T}^m)$ if and only if the following conditions hold:
		\begin{enumerate}
			\item $\widehat{u}_\xi \in C^\infty(V)$ for all $\xi \in \mathbb{Z}^m$;
			
			\item For every compact set $K \subset V$, every multi-index $\alpha \in \mathbb{N}_0^n$, and every $N \in \mathbb{N}_0$, there exists a constant $C_N > 0$ such that
			\[
			\sup_{t \in K} |\partial_t^\alpha \widehat{u}_\xi(t)| \leq C_N (1 + |\xi|)^{-N}, \quad \forall \xi \in \mathbb{Z}^m.
			\]
		\end{enumerate}
		
		Under these conditions, we have the representation
		\[
		u(t, x) = \sum_{\xi \in \mathbb{Z}^m} \widehat{u}_\xi(t) e^{i \xi\cdot x},
		\]
		with uniform convergence on compact subsets of $V \times \mathbb{T}^m$.
	\end{theorem}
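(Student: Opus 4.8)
The plan is to prove the two implications separately and then record the series representation, which falls out of the sufficiency argument. Throughout I will use the two elementary identities
\[
\partial_t^\alpha \widehat{u}_\xi = \widehat{(\partial_t^\alpha u)}_\xi, \qquad (-i\xi)^\beta\,\widehat{u}_\xi = \widehat{(\partial_x^\beta u)}_\xi,
\]
the first coming from differentiation under the integral sign and the second from integration by parts on $\mathbb{T}^m$, where no boundary terms appear because the torus is closed.

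For the necessity of \emph{(1)}--\emph{(2)}, I would start from $u \in C^\infty(V \times \mathbb{T}^m)$. Smoothness of each $\widehat{u}_\xi$ in $t$ follows from differentiating under the integral sign, giving \emph{(1)}. For the decay, I combine the two identities above to obtain $(-i\xi)^\beta \partial_t^\alpha \widehat{u}_\xi(t) = \widehat{(\partial_x^\beta \partial_t^\alpha u)}_\xi(t)$, whence for every compact $K \subset V$ and every $t \in K$,
\[
\bigl|\xi^\beta\, \partial_t^\alpha \widehat{u}_\xi(t)\bigr| \leq \frac{1}{(2\pi)^m}\int_{\mathbb{T}^m} \bigl|\partial_x^\beta \partial_t^\alpha u(t,x)\bigr|\,\mathrm{d}x \leq \sup_{K \times \mathbb{T}^m} \bigl|\partial_x^\beta \partial_t^\alpha u\bigr| =: C_{\alpha,\beta,K},
\]
which is finite by smoothness and compactness. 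Using the standard comparison $(1+|\xi|)^N \leq C_N \sum_{|\beta| \leq N} |\xi^\beta|$, summing the above over all $\beta$ with $|\beta| \leq N$ yields $(1+|\xi|)^N \sup_{t \in K}|\partial_t^\alpha \widehat{u}_\xi(t)| \leq C_N'$, which is precisely \emph{(2)}.

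For the sufficiency, assume \emph{(1)}--\emph{(2)} and define the formal series $v(t,x) = \sum_{\xi} \widehat{u}_\xi(t)\,e^{i\xi\cdot x}$. Differentiating a general term gives $\partial_t^\alpha \partial_x^\beta\bigl(\widehat{u}_\xi(t) e^{i\xi\cdot x}\bigr) = (i\xi)^\beta \bigl(\partial_t^\alpha \widehat{u}_\xi(t)\bigr) e^{i\xi\cdot x}$, whose supremum over $K \times \mathbb{T}^m$ is at most $|\xi|^{|\beta|} C_N (1+|\xi|)^{-N}$ by \emph{(2)}. Choosing $N = |\beta| + m + 1$ makes this bounded by $C_N (1+|\xi|)^{-m-1}$, which is summable over $\xi \in \mathbb{Z}^m$. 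By the Weierstrass M-test, every term-by-term derivative of the series converges uniformly on $K \times \mathbb{T}^m$ for each compact $K$, so the series converges in $C^\infty(V \times \mathbb{T}^m)$ to a smooth function $v$; this simultaneously establishes the asserted representation and its locally uniform convergence.

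It then remains to identify $v$ with $u$. Exploiting the uniform convergence to interchange the sum with the integral over $\mathbb{T}^m$ and using the orthogonality relations $\frac{1}{(2\pi)^m}\int_{\mathbb{T}^m} e^{i(\xi-\eta)\cdot x}\,\mathrm{d}x = \delta_{\xi\eta}$, I find $\widehat{v}_\eta = \widehat{u}_\eta$ for every $\eta \in \mathbb{Z}^m$. To pass from equality of all partial Fourier coefficients to equality of distributions, I would fix $\psi \in C_c^\infty(V \times \mathbb{T}^m)$ and expand it in its Fourier series in $x$, $\psi = \sum_\xi \psi_\xi \otimes e^{i\xi\cdot x}$ with $\psi_\xi(t) = (2\pi)^{-m}\int_{\mathbb{T}^m}\psi(t,x)e^{-i\xi\cdot x}\,\mathrm{d}x$; since $\psi$ is smooth with compact support, this series converges in the topology of $C_c^\infty(V \times \mathbb{T}^m)$. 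Continuity of $u$ and $v$ on $C_c^\infty$ then allows the interchange $\langle u, \psi\rangle = \sum_\xi \langle u, \psi_\xi \otimes e^{i\xi\cdot x}\rangle = \sum_\xi \langle \widehat{u}_{-\xi}, \psi_\xi\rangle$, and likewise for $v$; as the coefficients agree, $\langle u - v, \psi\rangle = 0$ for all $\psi$, so $u = v \in C^\infty$. The main obstacle I anticipate is precisely this last uniqueness step: one must justify interchanging the distributional pairing with the Fourier expansion of the test function, which hinges on the convergence of that expansion in the $C_c^\infty$ topology (uniform control of all derivatives together with fixed compact support in $t$) rather than merely pointwise or $L^2$ convergence.
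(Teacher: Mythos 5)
Your proof is correct, and it is worth noting that the paper itself gives no proof of this theorem: it is quoted as a standard fact about partial Fourier series, with the supporting details deferred to the literature (e.g.\ \cite{ADL2023gh}), so your proposal supplies a self-contained argument where the paper has none. Your route is the classical one and it is sound in all three steps: necessity via differentiation under the integral and integration by parts in $x$, combined with the comparison $(1+|\xi|)^N \leq C_N \sum_{|\beta|\leq N}|\xi^\beta|$; sufficiency via the Weierstrass M-test applied to every term-by-term derivative, choosing $N=|\beta|+m+1$ to gain summability over $\mathbb{Z}^m$, which simultaneously yields smoothness of the sum and the locally uniform convergence asserted in the statement; and the identification $u=v$ by testing against $\psi\in C_c^\infty(V\times\mathbb{T}^m)$ expanded in its own partial Fourier series. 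You correctly isolate the only delicate point, namely that this last expansion converges in the $C_c^\infty$ topology: the partial sums all have support in $(\operatorname{supp}_t\psi)\times\mathbb{T}^m$, a fixed compact set, and their derivatives converge uniformly because the coefficients $\psi_\xi$ decay rapidly in $\xi$, uniformly in $t$, together with all $t$-derivatives. Alternatively, this uniqueness step is exactly the content of the Proposition the paper states immediately after the theorem ($u=0$ if and only if $\widehat{u}_\xi=0$ for all $\xi$, attributed to \cite{ADL2023gh}), so you could have invoked that and shortened the argument. One cosmetic slip: with the paper's convention $\widehat{u}_\xi(t)=(2\pi)^{-m}\int_{\mathbb{T}^m}e^{-i\xi\cdot x}u(t,x)\,\mathrm{d}x$, integration by parts gives $\widehat{(\partial_x^\beta u)}_\xi=(i\xi)^\beta\widehat{u}_\xi$ rather than $(-i\xi)^\beta\widehat{u}_\xi$; the sign is immaterial for your absolute-value estimates, but it should be fixed.
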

	
	Now, let $f \in \mathscr{D}'(V \times \mathbb{T}^m)$ and let $\chi: V' \subset \mathbb{R}^n \to V$ be a diffeomorphism. Define $X = \chi \times \mathrm{id}_{\mathbb{T}^m}$ and let $\varphi = X^*f \in \mathscr{D}'(V' \times \mathbb{T}^m)$. Consider the partial Fourier coefficient $\widehat{\varphi}_\xi \in \mathscr{D}'(V')$, and define $\widehat{f}_\xi = (\chi^{-1})^*(\widehat{\varphi}_\xi) \in \mathscr{D}'(V)$. This construction is independent of the choice of parametrization; for details, see \cite{ADL2023gh}. Using a partition of unity, we can extend this construction to define the partial Fourier coefficients of smooth functions and distributions on $M \times \mathbb{T}^m$.
	
	Similarly, consider the space $\mathsf{\Lambda}^{0,1} C^\infty(M \times \mathbb{T}^m)$ of smooth 1-forms on $M \times \mathbb{T}^m$ that do not depend on the differentials of the torus variable $x$. 
	Specifically, in any coordinate system $(V, t_1, \dots, t_n)$ on $M$, a 1-form $f \in \mathsf{\Lambda}^{0,1} C^\infty(M \times \mathbb{T}^m)$ is expressed as
	\[
	f|_{V \times \mathbb{T}^m} = \sum_{j=1}^n f_j(t, x) \, \mathrm{d}t_j.
	\]
	
	In each coordinate system $(V, t_1, \dots, t_n)$ on $M$, we define the Fourier coefficient $\widehat{f}_\xi$ of the form $f \in \mathsf{\Lambda}^{0,1} C^\infty(M \times \mathbb{T}^m)$ by
	\[
	\widehat{f}_\xi = \sum_{j=1}^n (\widehat{f_j})_\xi \, \mathrm{d}t_j \in \mathsf{\Lambda}^1 C^\infty(V).
	\]

	Using a partition of unity, we then obtain the global representation
	\[
	f = \sum_{\xi \in \mathbb{Z}^m} \sum_{j=1}^n (\widehat{f_j})_\xi(t) e^{i \xi \cdot x} \, \mathrm{d}t_j,
	\]
	where the smoothness of $f$ is characterized by the decay properties of the Fourier coefficients $(\widehat{f_j})_\xi$, as described in the previous theorem.
	
	\begin{proposition}
		Let $u \in \mathscr{D}'(M \times \mathbb{T}^m)$. Then, $u = 0$ if and only if $\widehat{u}_\xi = 0$ for all $\xi \in \mathbb{Z}^m$.
	\end{proposition}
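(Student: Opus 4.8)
The plan is to establish the two implications separately. The forward direction is immediate: if $u=0$, then directly from the defining relation $\langle \widehat{u}_\xi,\phi\rangle = \langle u,\phi\otimes e^{-i\xi\cdot x}\rangle$ we get $\widehat{u}_\xi = 0$ for every $\xi\in\mathbb{Z}^m$, in any local chart and for any $\phi\in C_c^\infty(V)$. All the content lies in the converse, so I would concentrate on showing that $\widehat{u}_\xi = 0$ for all $\xi$ forces $u=0$.

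First I would reduce to a single product patch. Since vanishing of a distribution is a local property and $M$ is covered by coordinate charts $V\subset\mathbb{R}^n$, and since the coordinate-invariance of the construction (established in the partial Fourier series subsection) guarantees that the global coefficient $\widehat{u}_\xi$ restricts on $V$ to the chartwise partial Fourier coefficient of $u|_{V\times\mathbb{T}^m}$, it suffices to prove $u=0$ on an arbitrary $V\times\mathbb{T}^m$. I would then fix a test function $\psi\in C_c^\infty(V\times\mathbb{T}^m)$ and expand it as a Fourier series in the torus variable,
\[
\psi(t,x) = \sum_{\xi\in\mathbb{Z}^m}\psi_\xi(t)\,e^{i\xi\cdot x},\qquad \psi_\xi(t) = \frac{1}{(2\pi)^m}\int_{\mathbb{T}^m}\psi(t,x)\,e^{-i\xi\cdot x}\,\mathrm{d}x \in C_c^\infty(V),
\]
each $\psi_\xi$ supported in the fixed compact set $K=\mathrm{pr}_t(\mathrm{supp}\,\psi)\subset V$. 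Assuming $\widehat{u}_\xi = 0$ for all $\xi$, I would pass the pairing through the sum and compute
\[
\langle u,\psi\rangle = \sum_{\xi\in\mathbb{Z}^m}\langle u,\psi_\xi\otimes e^{i\xi\cdot x}\rangle = \sum_{\xi\in\mathbb{Z}^m}\langle \widehat{u}_{-\xi},\psi_\xi\rangle = 0,
\]
using $e^{i\xi\cdot x}=e^{-i(-\xi)\cdot x}$ and the definition of the coefficients; since $\psi$ is arbitrary this yields $u=0$ on $V\times\mathbb{T}^m$, hence on $M\times\mathbb{T}^m$.

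The hard part will be justifying the interchange of the distributional pairing with the infinite sum, i.e.\ showing that the partial sums of the Fourier series converge to $\psi$ in the topology of $C_c^\infty(V\times\mathbb{T}^m)$. Support is not an issue, since every partial sum lives in $K\times\mathbb{T}^m$; what must be checked is uniform convergence of all $(t,x)$-derivatives. This I would obtain from the rapid decay of the coefficients: repeated integration by parts in $x$ gives, for every $\alpha$ and $N$, a bound $\sup_t|\partial_t^\alpha\psi_\xi(t)|\le C_{\alpha,N}(1+|\xi|)^{-N}$, and since $|\partial_x^\gamma e^{i\xi\cdot x}|\le(1+|\xi|)^{|\gamma|}$, the tails $\sum_{|\xi|>R}\psi_\xi(t)e^{i\xi\cdot x}$ tend to zero in every $C^k$-seminorm. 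Convergence in the test-function topology then legitimizes the term-by-term pairing against $u$ by continuity, and the remaining manipulations are purely formal.
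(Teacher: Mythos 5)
Your proof is correct. The paper gives no argument of its own for this proposition---its proof is just the citation \cite[Lemma 5.1]{ADL2023gh}---and your self-contained argument (trivial forward direction; reduction to product charts via locality and chart-independence of the coefficients; expansion of a test function $\psi$ in a partial Fourier series; and, crucially, the integration-by-parts bound $\sup_t|\partial_t^\alpha\psi_\xi(t)|\leq C_{\alpha,N}(1+|\xi|)^{-N}$ giving convergence of the partial sums in the topology of $C_c^\infty(V\times\mathbb{T}^m)$, which legitimizes passing the pairing through the sum) is exactly the standard proof that such a citation stands for, with the one genuinely nontrivial step correctly identified and justified.
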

	
	\begin{proof}
		See \cite[Lemma 5.1]{ADL2023gh}.
	\end{proof}

	Now let $M$ be an $n$-dimensional asymptotically Euclidean manifold, and let $\omega$ be a closed smooth real-valued 1-form on $M$. Consider the operator
	\begin{equation}\label{L}
		\mathbb{L}: C^\infty(M \times \mathbb{T}^m) \to \mathsf{\Lambda}^{0,1} C^\infty(M \times \mathbb{T}^m), \quad \mathbb{L}u = \mathrm{d}_t u + \omega \wedge \partial_x u.
	\end{equation}
	
	We then have the following consequence.
	
	\begin{corollary}
		Let $u \in \mathscr{D}'(M \times \mathbb{T}^m)$ satisfy $\mathbb{L}^0 u \in \mathsf{\Lambda}^{0,1} C^\infty(M \times \mathbb{T}^m)$. Then $\widehat{u}_\xi(t) \in C^\infty(M)$ for all $\xi \in \mathbb{Z}^m$.
	\end{corollary}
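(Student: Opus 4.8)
The plan is to pass to partial Fourier coefficients in the torus variable and thereby reduce the hypothesis to a family of elliptic problems on $M$, one for each frequency $\xi$, to which the elliptic-complex machinery of Subsection \ref{sect_ellcomp} applies.

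Write $f := \mathbb{L}u \in \mathsf{\Lambda}^{0,1} C^\infty(M \times \mathbb{T}^m)$ with $\mathbb{L}$ as in \eqref{L}. First I would compute the $\xi$-th partial Fourier coefficient of the identity $\mathbb{L}u = f$. Since $\mathrm{d}_t$ differentiates only in the $M$-variable, it commutes with the Fourier-coefficient operation, so $\widehat{(\mathrm{d}_t u)}_\xi = \mathrm{d}\,\widehat{u}_\xi$; and since $\omega$ is independent of $x$ while $\widehat{(\partial_x u)}_\xi = i\xi\,\widehat{u}_\xi$, the wedge term contributes $i\xi\,\omega\,\widehat{u}_\xi$. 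Hence, at the level of distributions on $M$,
\[
L_\xi\widehat{u}_\xi := \mathrm{d}\,\widehat{u}_\xi + i\xi\,\omega\,\widehat{u}_\xi = \widehat{f}_\xi,
\]
where $\widehat{f}_\xi \in \mathsf{\Lambda}^1 C^\infty(M)$ is smooth because $f$ is. Justifying these identities for a general $u \in \mathscr{D}'(M\times\mathbb{T}^m)$, rather than a smooth function, is the technical step: it rests on the distributional partial Fourier theory recalled above, i.e. testing against $\phi \otimes e^{-i\xi\cdot x}$ and transferring the derivatives in $t$ and $x$ onto the test form.

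Next I would recognize $L_\xi$ as the degree-zero operator of an elliptic complex. Define $\nabla^\xi\alpha := \mathrm{d}\alpha + i\xi\,\omega\wedge\alpha$ on $\mathsf{\Lambda}^\bullet C^\infty(M)$; a direct computation gives $(\nabla^\xi)^2\alpha = i\xi\,(\mathrm{d}\omega)\wedge\alpha$, which vanishes precisely because $\omega$ is closed. Thus $(\mathsf{\Lambda}^\bullet C^\infty(M), \nabla^\xi)$ is a genuine complex, and since $i\xi\,\omega\wedge(\cdot)$ is a zeroth-order term, it has the same principal symbol as the de Rham complex; the latter is elliptic, its symbol sequence being the exact Koszul complex $\tau\wedge(\cdot)$, so $(\mathsf{\Lambda}^\bullet C^\infty(M), \nabla^\xi)$ is elliptic as well, with $L_\xi = \nabla^\xi$ in degree $0$.

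Finally, Proposition \ref{gh_firstlevel} applies to the elliptic complex just constructed and shows that $L_\xi$ is globally hypoelliptic on $M$; note that this proposition is not restricted to compact $M$, as its proof only invokes the local elliptic regularity of the associated Laplacian. Since $L_\xi\widehat{u}_\xi = \widehat{f}_\xi \in C^\infty(M)$, this yields $\widehat{u}_\xi \in C^\infty(M)$ for every $\xi \in \mathbb{Z}^m$, as claimed. Equivalently, one can bypass the proposition and argue directly: the twisted Laplacian $\mathcal{L}_0 = (L_\xi)^* L_\xi$ is a second-order elliptic operator with $\mathcal{L}_0\widehat{u}_\xi = (L_\xi)^*\widehat{f}_\xi \in C^\infty(M)$, so local elliptic regularity gives the smoothness of $\widehat{u}_\xi$. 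I expect the main obstacle to be the bookkeeping in the first step, namely the careful justification that taking partial Fourier coefficients commutes with $\mathbb{L}$ for distributional $u$; the conceptual crux, by contrast, is that closedness of $\omega$ turns $L_\xi$ into the initial map of an elliptic flat-connection de Rham complex, which is exactly what makes Proposition \ref{gh_firstlevel} available.
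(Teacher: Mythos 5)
Your proposal is correct and follows essentially the same route as the paper: pass to partial Fourier coefficients, identify $\mathbb{L}^0_\xi$ as the degree-zero operator of a zero-order perturbation of the de Rham complex (elliptic since the principal symbols coincide), and invoke Proposition \ref{gh_firstlevel}. Your explicit verification that $(\nabla^\xi)^2 = i\xi\,(\mathrm{d}\omega)\wedge(\cdot) = 0$ and the remark that the argument is purely local (hence valid on non-compact $M$) are details the paper leaves implicit, but they do not change the approach.
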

	\begin{proof}
		Suppose that $u \in \mathscr{D}'(M \times \mathbb{T}^m)$ satisfies $\mathbb{L}^0 u = f \in \mathsf{\Lambda}^{0,1} C^\infty(M \times \mathbb{T}^m)$. Taking the partial Fourier series in the toroidal variable, for each $\xi \in \mathbb{Z}^m$ we obtain
		\[
		\mathbb{L}_\xi^0 \widehat{u}_\xi(t) = \mathrm{d} \widehat{u}_\xi(t) + i (\xi \cdot \boldsymbol{\omega}) \widehat{u}_\xi(t) = \widehat{f}_\xi(t),
		\]
		where $\mathrm{d}$ denotes the exterior derivative on $M$. 
		
		Observe that $\mathbb{L}_\xi^0$ is the first operator in the complex
		\[
		0 \to C^\infty(M) \xrightarrow{\mathbb{L}_\xi^0} \mathsf{\Lambda}^1 C^\infty(M) \xrightarrow{\mathbb{L}_\xi^1} \cdots \xrightarrow{\mathbb{L}_\xi^{n-1}} \mathsf{\Lambda}^n C^\infty(M) \to 0,
		\]
		which is a zero-order perturbation of the de Rham complex. Since the principal symbols of both complexes coincide and the de Rham complex is elliptic, it follows that the perturbed complex is also elliptic. Hence, by Proposition \ref{gh_firstlevel}, the operator $\mathbb{L}_\xi^0$ is globally hypoelliptic, which implies that $\widehat{u}_\xi(t) \in C^\infty(M)$ for all $\xi \in \mathbb{Z}^m$.
	\end{proof}

	\begin{lemma}\label{lemma_BCM}
		Let $u \in \mathscr{D}'(M \times \mathbb{T}^m)$ such that $\mathbb{L}u = f \in \mathsf{\Lambda}^{0,1}C^\infty(M \times \mathbb{T}^m)$. Then, given a local chart $V \subset M$, we have $u \in C^\infty(V \times \mathbb{T}^m)$ if and only if, for every $N > 0$, there exists $C > 0$ such that, for all $\xi \in \mathbb{Z}^m$,
		\begin{equation}\label{base}
			\sup_{t \in V} |\widehat{u}_\xi(t)| \leq C (1 + |\xi|)^{-N}.
		\end{equation}
	\end{lemma}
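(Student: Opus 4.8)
The plan is to prove the two implications separately, with the substantive content lying in the sufficiency of the decay condition.

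For the necessity, I would simply invoke the characterization of smoothness through partial Fourier coefficients established earlier in this subsection. If $u \in C^\infty(V \times \mathbb{T}^m)$, that theorem yields, for every compact $K \subset V$, every multi-index $\alpha$, and every $N$, a bound $\sup_K |\partial_t^\alpha \widehat u_\xi| \leq C_N (1+|\xi|)^{-N}$; specializing to $\alpha = 0$ gives exactly \eqref{base}, working if necessary with a relatively compact chart so that the supremum over $V$ is controlled by a supremum over a compact set.

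For the sufficiency, the starting point is the observation recorded in the corollary preceding the lemma: each $\widehat u_\xi$ is smooth on $M$ and satisfies $\mathrm{d}\widehat u_\xi + i(\xi \cdot \boldsymbol{\omega})\widehat u_\xi = \widehat f_\xi$. Reading this identity componentwise in the coordinates $(t_1,\dots,t_n)$ on $V$, I would write
\[
\partial_{t_j}\widehat u_\xi = (\widehat f_\xi)_j - i(\xi\cdot\boldsymbol{\omega})_j\,\widehat u_\xi, \qquad j = 1,\dots,n,
\]
where $(\xi\cdot\boldsymbol{\omega})_j = \sum_k \xi_k (\omega_k)_j$ is the coefficient of $\mathrm{d}t_j$ in the $1$-form $\xi\cdot\boldsymbol{\omega}$. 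This expresses every first-order $t$-derivative of $\widehat u_\xi$ in terms of the zeroth-order quantities $\widehat f_\xi$ and $\widehat u_\xi$, at the cost of multiplication by a coefficient that grows only linearly in $\xi$ on any compact subset of $V$. The heart of the argument is then a bootstrap: I claim that for every multi-index $\alpha$, every compact $K \subset V$, and every $N$ there is a constant with $\sup_K|\partial_t^\alpha\widehat u_\xi| \leq C_{\alpha,N}(1+|\xi|)^{-N}$, proved by induction on $|\alpha|$. The base case $\alpha = 0$ is the hypothesis \eqref{base}. For the inductive step, write a derivative of order $\ell+1$ as $\partial_t^\alpha\partial_{t_j}$ with $|\alpha| = \ell$, substitute the displayed identity, and apply the Leibniz rule to $\partial_t^\alpha[(\xi\cdot\boldsymbol{\omega})_j\widehat u_\xi]$. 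Each resulting term is a derivative $\partial_t^\gamma(\xi\cdot\boldsymbol{\omega})_j$ — bounded by $C|\xi|$ on $K$, since differentiation only touches the smooth, $\xi$-independent coefficients $(\omega_k)_j$ — times a derivative $\partial_t^{\alpha-\gamma}\widehat u_\xi$ of order $\leq \ell$, to which the induction hypothesis applies with exponent $N+1$; the factor $|\xi|$ is then absorbed by the extra power, yielding decay of order $N$. The contributions from $\partial_t^\alpha(\widehat f_\xi)_j$ decay rapidly because $f$ is smooth. Summing the finitely many terms closes the induction, and rapid decay of all $t$-derivatives of $\widehat u_\xi$ on every compact $K \subset V$ gives $u \in C^\infty(V\times\mathbb{T}^m)$ by the partial Fourier characterization of smoothness.

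The key mechanism that makes the proof work — which I expect to be the only genuinely delicate point — is that the equation $\mathbb{L}u = f$ lets one trade each $t$-differentiation of $\widehat u_\xi$ for multiplication by a symbol of at most linear growth in $\xi$ together with a rapidly decaying inhomogeneous term; this is precisely what allows the single scalar estimate \eqref{base} to propagate to all derivatives without spoiling the rapid decay. The one point requiring care is to carry out every estimate on relatively compact subsets, where the smooth forms $\omega_k$ and the coefficients of $f$, together with all their derivatives, are uniformly bounded, so that the linear-in-$\xi$ growth of $(\xi\cdot\boldsymbol{\omega})_j$ is the only source of $\xi$-dependence in the coefficients.
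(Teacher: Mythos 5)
Your proposal is correct and follows essentially the same route as the paper's proof: induction on the order of the $t$-derivative, using the coordinate form of the transformed equation $\partial_{t_j}\widehat u_\xi + i\sum_k \xi_k\omega_{kj}\widehat u_\xi = (\widehat f_j)_\xi$ to trade each differentiation for a coefficient of linear growth in $\xi$, which the Leibniz rule plus the inductive hypothesis (applied with a higher exponent) absorbs. Your explicit remark about working on compact subsets and using $N+1$ in the induction hypothesis to absorb the factor $|\xi|$ makes precise a point the paper leaves implicit, but the argument is the same.
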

	
	\begin{proof}
		The proof follows by induction on $|\alpha|$, and is analogous to \cite[Lemma 5.13]{AFJR2024}. If $u$ is smooth in $V \times \mathbb{T}^m$, then \eqref{base} clearly holds.
		
		Conversely, suppose that \eqref{base} holds. In the local chart $V$, we write
		\[
		f(t,x) = \sum_{j=1}^n f_j(t,x)\, \mathrm{d}t_j, \qquad 
		\omega_k(t) = \sum_{j=1}^{n} \omega_{kj}(t)\, \mathrm{d}t_j, \quad k = 1,\dots,m,
		\]
		with $f_j \in C^\infty(V \times \mathbb{T}^m)$ and $\omega_{kj} \in C^\infty(V)$.
		
		Applying the Fourier transform in the $\mathbb{T}^m$-variables, we obtain, for each $\xi \in \mathbb{Z}^m$,
		\[
		\mathbb{L}_\xi \widehat{u}_\xi(t) = d_t \widehat{u}_\xi(t) + i \, \xi \cdot \omega(t) \, \widehat{u}_\xi(t) = \widehat{f}_\xi(t).
		\]
		
		In coordinates, for each $j=1,\dots,n$, this reads
		\[
		\partial_{t_j} \widehat{u}_\xi(t) + i \sum_{k=1}^{m} \xi_k \omega_{kj}(t) \widehat{u}_\xi(t) = (\widehat{f}_j)_\xi(t)
		\]
		
		We proceed by induction on $|\alpha|$. The base case $|\alpha| = 0$ is just inequality \eqref{base}. 
		
		Assume now that for some multi-index $\beta$ with $|\beta| = \ell$, and for any $N \in \mathbb{N}$, there exists $C > 0$ such that
		\[
		\sup_{t \in V} |\partial_t^\beta \widehat{u}_\xi(t)| \leq C (1 + |\xi|)^{-N}.
		\]
		
		Let $\alpha$ be a multi-index with $|\alpha| = \ell + 1$, and write $\alpha = \beta + e_j$, where $e_j$ is the $j$-th canonical basis vector of $\mathbb{R}^n$. Differentiating the identity above, we obtain
		\[
		\partial_t^\alpha \widehat{u}_\xi(t) = \partial_t^\beta \left( (\widehat{f}_j)_\xi(t) - i \sum_{k=1}^{m} \xi_k \omega_{kj}(t) \widehat{u}_\xi(t) \right).
		\]
		
		Since $\widehat{f}_j \in C^\infty(V \times \mathbb{Z}^m)$, we have, for all $N \in \mathbb{N}$,
		\[
		\sup_{t \in V} |\partial_t^\beta (\widehat{f}_j)_\xi(t)| \leq C (1 + |\xi|)^{-N}.
		\]
		
		Moreover, by applying the Leibniz rule to the product $\omega_{kj}(t) \widehat{u}_\xi(t)$ and using the inductive hypothesis (since derivatives of order $\leq \ell$ of $\widehat{u}$ satisfy the desired decay), we conclude that $\partial_t^\alpha \widehat{u}_\xi(t)$ also satisfies the same type of bound. Therefore, $\widehat{u}_\xi(t)$ is rapidly decreasing in $\xi$ uniformly in $t$, along with all its derivatives, which implies $u \in C^\infty(V \times \mathbb{T}^m)$.
	\end{proof}

%====================================	
%====================================	
\section{Involutive Systems on $M\times\mathbb{T}$}
%====================================	
%====================================	

	Let $M$ be an $n$-dimensional asymptotically Euclidean manifold, and let $\omega$ be a smooth, closed, real-valued 1-form on $M$. In this section, we study the global regularity of the operator
	\begin{equation}\label{L}
		\mathbb{L}:C^\infty(M\times\mathbb{T})\to \mathsf{\Lambda}^{0,1} C^\infty(M\times\mathbb{T}),\quad \mathbb{L}u=\mathrm{d}_t u+\omega\wedge\partial_xu.
	\end{equation}
	
	\begin{definition}\label{defi_liou}
		Let $\omega\in \mathsf{\Lambda}^1 C^\infty_{\partial M}(M)$ be a real-valued closed 1-form. We say that $\omega$ is:
		\begin{itemize}
			\item[(i)] \emph{integral} if $\int_\sigma\omega\in 2\pi\mathbb{Z}$ for every smooth 1-cycle $\sigma$ in $M$;
			\item[(ii)] \emph{rational} if there exists $q\in\mathbb{Z}$ such that $q\omega$ is integral;
			\item[(iii)] \emph{Liouville} if $\omega$ is not rational and there exist a sequence of closed integral 1-forms $\{\theta_j\}$ and a sequence of integers $\{q_j\}$ with $q_j\geq 2$ and $q_j\to\infty$, such that $\{q_j^j(\omega-\theta_j/q_j)\}$ is bounded in $\mathsf{\Lambda}^1 C^\infty(M)$ with respect to the standard Fréchet topology.
		\end{itemize}
	\end{definition}
	
	Note that this definition depends only on the homology classes of the 1-cycles and the cohomology class of the 1-forms. Indeed, by de Rham’s Theorem, we identify $H^1(M)$ with the dual space of $H_1(M;\mathbb{R})$, which is finite-dimensional and spanned by the free part of $H_1(M;\mathbb{Z})$. The isomorphism is given by
	\[
	[\omega]\mapsto \left([\sigma]\mapsto\dfrac{1}{2\pi}\int_{\sigma}\omega\right).
	\]
	
	Since $H^1(M)$ is finite-dimensional, denote $d'=\dim H^1(M)$. Let $\mathcal{B}'=\{[\omega_1],\dots,[\omega_d]\}$ be a basis of $H^1_{\partial M}(M)$, with $d=\dim H^1_{\partial M}(M)$, where each $\omega_k$ is harmonic (by Hodge Theorem) and orthonormal with respect to the $L^2$ inner product. 
	
	Extend $\mathcal{B}'$ to a basis $\mathcal{B}'_0=\{[\omega_1],\dots,[\omega_d],[\omega_{d+1}],\dots,[\omega_{d'}]\}$ of $H^1(M)$, and let $\sigma_1,\dots,\sigma_{d'}$ be 1-cycles generating the free part $H_1(M;\mathbb{R})$ of $H_1(M;\mathbb{Z})$, chosen so that $\mathcal{B}_0=\{[\sigma_1],\dots,[\sigma_{d'}]\}$ is dual to $\mathcal{B}_0'$. Then, for any $\omega\in \mathsf{\Lambda}^1 C^\infty_{\partial M}(M)$, it holds that 
	\[ \int_{\sigma_\ell}\omega=0, \quad \text{ for } d<\ell\leq d'.\]
	
	Define the map
	\[
	I:H^1_{\partial M}(M)\to \mathbb{R}^d,\quad [\omega] \mapsto \dfrac{1}{2\pi}\left(\int_{\sigma_1}\omega,\dots,\int_{\sigma_d}\omega\right).
	\]
	
	\begin{theorem}\label{thm_liou}
		Let $\omega\in \mathsf{\Lambda}^1 C^\infty_{\partial M}(M)$ be real and closed. Then:
		\begin{itemize}
			\item[(i)] $\omega$ is integral if and only if $I([\omega])\in \mathbb{Z}^d$;
			\item[(ii)] $\omega$ is rational if and only if $I([\omega])\in \mathbb{Q}^d$;
			\item[(iii)] $\omega$ is Liouville if and only if there exist sequences $\{p_j\}\subset\mathbb{Z}^d$, $\{q_j\}\subset\mathbb{N}$ with $q_j\geq 2$ and $q_j\to \infty$, and a constant $C>0$, such that for all $j\in\mathbb{N}$ we have
			\begin{equation}\label{liou}
				\left|I([\omega])-\dfrac{p_j}{q_j}\right|\leq\dfrac{C}{q_j^j}.
			\end{equation}
		\end{itemize}
	\end{theorem}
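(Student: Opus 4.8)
The plan is to reduce all three statements to arithmetic conditions on the period vector $I([\omega])$, exploiting that the properties in Definition \ref{defi_liou} depend only on the cohomology class $[\omega]$ together with the de Rham pairing $H^1(M)\times H_1(M;\mathbb{R})\to\mathbb{R}$. The starting observation is that for a closed form the period $\int_\sigma\omega$ depends only on $[\sigma]\in H_1(M;\mathbb{R})$: over a torsion cycle $\tau$ one has $n\int_\tau\omega=\int_{n\tau}\omega=0$ whenever $n[\tau]=0$, so torsion classes contribute nothing, and every cycle is, up to torsion, an integral combination of the chosen free generators $\sigma_1,\dots,\sigma_{d'}$. Hence $\omega$ is integral if and only if $\int_{\sigma_\ell}\omega\in 2\pi\mathbb{Z}$ for every $\ell=1,\dots,d'$. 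Since $[\omega]\in H^1_{\partial M}(M)$ forces $\int_{\sigma_\ell}\omega=0$ for $d<\ell\le d'$, these conditions are vacuous beyond $\ell=d$, and what remains is exactly $\tfrac{1}{2\pi}\int_{\sigma_\ell}\omega=I([\omega])_\ell\in\mathbb{Z}$ for $\ell=1,\dots,d$, that is, $I([\omega])\in\mathbb{Z}^d$. This settles (i), and (ii) follows at once from the linearity of $I$: since $q\omega$ is integral iff $qI([\omega])\in\mathbb{Z}^d$, such a nonzero integer $q$ exists iff $I([\omega])\in\mathbb{Q}^d$.

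For (iii) I would argue both implications through the same period dictionary. For the forward direction the key fact is that, for each fixed cycle $\sigma_\ell$, the functional $\eta\mapsto\int_{\sigma_\ell}\eta$ is continuous on $\mathsf{\Lambda}^1 C^\infty(M)$ with its Fréchet topology, being dominated by a $C^0$-seminorm over the compact image of $\sigma_\ell$. Thus if $\omega$ is Liouville and $\{q_j^j(\omega-\theta_j/q_j)\}$ is bounded in $\mathsf{\Lambda}^1 C^\infty(M)$, then its periods over $\sigma_1,\dots,\sigma_d$ are uniformly bounded; setting $(p_j)_\ell=\tfrac{1}{2\pi}\int_{\sigma_\ell}\theta_j\in\mathbb{Z}$, which is an integer precisely because $\theta_j$ is integral, turns this boundedness into $q_j^j\,|I([\omega])-p_j/q_j|\le C$, which is \eqref{liou}.

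The converse is where the real work lies, and is the step I expect to be the main obstacle. Given the approximation \eqref{liou}, I must manufacture genuine closed integral $1$-forms $\theta_j$ on $M$ whose rescalings stay bounded in every $C^k$-seminorm on compacta, not merely at the level of periods. The device is to separate the harmonic and exact parts of $\omega$: by the Hodge Theorem and the choice of the orthonormal harmonic basis $\omega_1,\dots,\omega_d$ dual to $\sigma_1,\dots,\sigma_d$, one has $\omega=\sum_{k=1}^d I([\omega])_k\,\omega_k+\mathrm{d}f$ for some $f\in C^\infty(M)$. I then set
\[
\theta_j=\sum_{k=1}^d (p_j)_k\,\omega_k+q_j\,\mathrm{d}f,
\]
which is closed, and is integral because its periods equal $2\pi(p_j)_\ell\in 2\pi\mathbb{Z}$ for $\ell\le d$ and vanish for $\ell>d$. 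The point of the term $q_j\,\mathrm{d}f$ is that it cancels the exact part of $\omega$ exactly, leaving
\[
q_j^j\Bigl(\omega-\frac{\theta_j}{q_j}\Bigr)=\sum_{k=1}^d q_j^j\Bigl(I([\omega])_k-\frac{(p_j)_k}{q_j}\Bigr)\omega_k,
\]
a fixed finite combination of the $\omega_k$ whose coefficients are bounded by $q_j^j\,|I([\omega])-p_j/q_j|\le C$. Boundedness in the Fréchet topology is then immediate, so $\omega$ satisfies the form-theoretic definition. Finally, to land inside the Liouville class one must also know that $\omega$ is not rational; by part (ii) this is the statement $I([\omega])\notin\mathbb{Q}^d$, which I would carry as a standing hypothesis on the right-hand side of (iii) --- otherwise a rational $I([\omega])$ would satisfy \eqref{liou} trivially by taking $p_j/q_j\equiv I([\omega])$. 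With this caveat the equivalence in (iii) is complete.
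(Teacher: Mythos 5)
Your proof is correct, and for items (i), (ii) and the forward implication of (iii) it follows the same route as the paper: periods of a closed form depend only on homology, vanish on torsion classes and on the cycles $\sigma_\ell$ with $d<\ell\leq d'$, and each period functional is continuous on the Fr\'echet space $\mathsf{\Lambda}^1C^\infty(M)$, so boundedness of $\{q_j^j(\omega-\theta_j/q_j)\}$ yields \eqref{liou} with $p_j=I([\theta_j])$. The genuine divergence is in the converse of (iii), and there your construction is actually the more careful one. The paper takes $\theta_j=\vartheta_j$, the harmonic form with $I([\vartheta_j])=p_j$, writes $\omega=\vartheta+\mathrm{d}\eta$ with $\vartheta$ harmonic, concludes (correctly, since $I$ is a linear isomorphism of finite-dimensional spaces) that $\{q_j^j(\vartheta-\vartheta_j/q_j)\}$ is bounded, and then invokes continuity of the translation $\alpha\mapsto\alpha+\mathrm{d}\eta$ to claim that $\{q_j^j(\omega-\vartheta_j/q_j)\}$ is bounded. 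That last step is a slip: $q_j^j(\omega-\vartheta_j/q_j)=q_j^j(\vartheta-\vartheta_j/q_j)+q_j^j\,\mathrm{d}\eta$, whereas the translation argument only controls $q_j^j(\vartheta-\vartheta_j/q_j)+\mathrm{d}\eta$; the term $q_j^j\,\mathrm{d}\eta$ is unbounded unless $\omega$ is itself harmonic. Your choice $\theta_j=\sum_k(p_j)_k\,\omega_k+q_j\,\mathrm{d}f$ is precisely the repair: the correction $q_j\,\mathrm{d}f$ is exact, so it changes no periods and preserves integrality, and it cancels the exact part of $\omega$ exactly, leaving $q_j^j(\omega-\theta_j/q_j)=\sum_k q_j^j\bigl(I([\omega])_k-(p_j)_k/q_j\bigr)\omega_k$, a combination of finitely many fixed smooth forms with coefficients bounded by \eqref{liou}. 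Your closing caveat is also well taken: as stated, the right-hand side of (iii) is satisfied trivially by any rational $\omega$ (take $p_j/q_j\equiv I([\omega])$ with $q_j\to\infty$), while Definition~\ref{defi_liou} excludes rational forms from the Liouville class; the paper's proof ends with ``we conclude that $\omega$ is Liouville'' without verifying non-rationality, so the standing hypothesis you add (equivalently, reading (iii) as an equivalence among non-rational forms) is genuinely needed for the statement to be literally true.
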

	
	\begin{proof}
		Items (i) and (ii) follow directly from the fact that $\int_{\sigma_\ell}\omega=0$ for all $\ell=d+1,\dots, d'$.
		
		Assume $\omega$ is Liouville, and let $\{\theta_j\}$ and $\{q_j\}$ be as in Definition \ref{defi_liou}. Then,
		\[
		\left|q_j^j I\left(\left[\omega-\frac{\theta_j}{q_j}\right]\right)\right| = \sum_{\ell=1}^d\left|\dfrac{1}{2\pi}\int_{\sigma_\ell}q_j^j\left(\omega-\dfrac{\theta_j}{q_j}\right)\right|\leq C,
		\]
		due to the boundedness of the sequence $\{q_j^j(\omega-\theta_j/q_j)\}$ in the Fréchet topology. Using the $1$-norm on $\mathbb{R}^d$ and defining $p_j=I([\theta_j])\in\mathbb{Z}^d$, we obtain the desired inequality.
		
		Conversely, suppose \eqref{liou} holds. By the Hodge theorem, $I$ is an isomorphism between $\mathcal{H}^1(M)$ and $\mathbb{R}^d$. For each $j$, let $\vartheta_j\in\mathcal{H}^1(M)$ satisfy $I([\vartheta_j])=p_j$. Then each $\vartheta_j$ is integral. Let $\vartheta\in\mathcal{H}^1(M)$ with $[\omega]=[\vartheta]$, i.e., $\omega-\vartheta=\mathrm{d}\eta$ for some $\eta\in C^\infty(M)$. Then,
		\[
		\left|I\left(\left[q_j^j\left(\omega - \frac{\vartheta_j}{q_j}\right)\right]\right)\right| = \left|I\left(q_j^j\left[\vartheta - \frac{\vartheta_j}{q_j}\right]\right)\right| = q_j^j\left|I([\omega]) - \dfrac{p_j}{q_j}\right| \leq C.
		\]
		
		Since $I:\mathcal{H}^1(M)\to\mathbb{R}^d$ is an isomorphism of Fréchet spaces, it is continuous and preserves bounded sets. Hence, $\{q_j^j(\vartheta - \vartheta_j/q_j)\}$ is bounded. Because the map $\alpha \mapsto \alpha + \mathrm{d}\eta$ is continuous on $\mathsf{\Lambda}^1 C^\infty(M)$, the sequence $\{q_j^j(\omega - \vartheta_j/q_j)\}$ is also bounded. We conclude that $\omega$ is Liouville.
	\end{proof}

	\begin{proposition}\label{uni_cov}
		Let $\Pi:\hat M\to M$ be the universal covering of $M$ and $\omega$ a real and closed 1-form on $M$. The following statements are equivalent:
		\begin{enumerate}
			\item $\omega$ is integral;
			\item For every $\psi\in C^\infty(\hat{M})$ such that $\mathrm{d}\psi=\Pi^*\omega$, and for all $P,Q\in\hat{M}$ with $\Pi(P)=\Pi(Q)$, we have $\psi(P)-\psi(Q)\in 2\pi\mathbb{Z}$.
		\end{enumerate}
	\end{proposition}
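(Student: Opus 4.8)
The plan is to reduce both implications to a single line-integral identity on the universal cover. Since $\hat M$ is connected and simply connected, the closed form $\Pi^*\omega$ is exact, so a primitive $\psi$ with $\mathrm{d}\psi = \Pi^*\omega$ exists and is unique up to an additive constant; in particular any difference $\psi(P)-\psi(Q)$ is independent of the chosen primitive, so it suffices to argue with one fixed $\psi$. Given $P,Q\in\hat M$ with $\Pi(P)=\Pi(Q)$, I would choose a smooth path $\widetilde c\colon[0,1]\to\hat M$ from $P$ to $Q$ and set $c=\Pi\circ\widetilde c$. Then $c$ is a smooth loop in $M$ (a $1$-cycle), and by the fundamental theorem of calculus for line integrals together with the naturality of the pullback,
\[
\psi(Q)-\psi(P)=\int_{\widetilde c}\mathrm{d}\psi=\int_{\widetilde c}\Pi^*\omega=\int_{c}\omega .
\]
This identity is the crux: it converts fiberwise differences of $\psi$ into periods of $\omega$ over loops, and conversely.

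The implication $(1)\Rightarrow(2)$ is then immediate: for $P,Q$ in a common fiber the loop $c$ above is a smooth $1$-cycle, so integrality of $\omega$ gives $\int_c\omega\in 2\pi\mathbb{Z}$, whence $\psi(P)-\psi(Q)\in 2\pi\mathbb{Z}$.

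For $(2)\Rightarrow(1)$ I would first reduce an arbitrary smooth $1$-cycle to loops. Because $\omega$ is closed, $\sigma\mapsto\int_\sigma\omega$ descends to a homomorphism on $H_1(M;\mathbb{Z})$, since two cycles in the same homology class differ by a boundary and Stokes' theorem kills it. By the Hurewicz theorem $H_1(M;\mathbb{Z})$ is the abelianization of $\pi_1(M)$, hence generated by classes of based loops; writing $[\sigma]=\sum_i m_i[c_i]$ with $c_i$ loops and $m_i\in\mathbb{Z}$, we get $\int_\sigma\omega=\sum_i m_i\int_{c_i}\omega$. It therefore suffices to show $\int_c\omega\in 2\pi\mathbb{Z}$ for every loop $c$ based at some $p_0\in M$. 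Lifting $c$ to a path $\widetilde c$ starting at a point $P$ over $p_0$, the endpoint $Q=\widetilde c(1)$ again lies over $p_0$ precisely because $c$ is closed, so $\Pi(P)=\Pi(Q)$; the displayed identity and hypothesis $(2)$ then give $\int_c\omega=\psi(Q)-\psi(P)\in 2\pi\mathbb{Z}$.

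The lifting and computation steps are routine; the step that deserves care is the reduction of general $1$-cycles to loops, that is, the use of homotopy invariance of periods (via Stokes) together with Hurewicz surjectivity, which guarantees that testing integrality on loops is equivalent to testing it on all $1$-cycles. I would emphasize that this proposition is purely topological: neither the scattering geometry nor the growth conditions on $\omega$ play any role, only that $M$ is connected, so that its universal cover is connected and $\psi$ is determined up to a single additive constant.
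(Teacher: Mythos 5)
Your proof is correct and takes essentially the same route as the paper's: the same lifted line-integral identity $\psi(Q)-\psi(P)=\int_{\tilde c}\Pi^*\omega=\int_c\omega$, the fundamental theorem of calculus/Stokes, and Hurewicz's theorem. If anything, your reduction of arbitrary smooth $1$-cycles to based loops in the direction $(2)\Rightarrow(1)$ is more careful than the paper's argument, which lifts a $1$-cycle as though it were a single loop and instead invokes Hurewicz in the direction $(1)\Rightarrow(2)$.
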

	
	\begin{proof}
		Suppose first that $\omega$ is integral. Let $\psi\in C^\infty(\hat M)$ be such that $\mathrm{d}\psi=\Pi^*\omega$, and let $P,Q\in\hat M$ satisfy $\Pi(P)=\Pi(Q)$. Consider a smooth path $\gamma$ from $P$ to $Q$ in $\hat M$. Then, $\Pi\circ\gamma$ is a closed path in $M$, and thus defines a smooth 1-cycle. By Hurewicz’s Theorem \cite[Theorem 13.14]{Lee}, there exists a smooth 1-cycle $\sigma$ in $M$ homologous to $\Pi(\gamma)$. Applying Stokes' Theorem, we obtain
		\[
		\psi(P)-\psi(Q)=\int_\gamma \mathrm{d}\psi = \int_\gamma \Pi^*\omega = \int_{\Pi(\gamma)} \omega = \int_\sigma \omega \in 2\pi\mathbb{Z}.
		\]
		
		Conversely, suppose that condition (2) holds. Let $\sigma$ be a smooth 1-cycle in $M$ and let $\hat\sigma:[0,1]\to\hat M$ be a lift of $\sigma$. Denote $Q=\hat\sigma(0)$ and $P=\hat\sigma(1)$. Clearly, $\Pi(P)=\Pi(Q)$. Let $\psi\in C^\infty(\hat M)$ satisfy $\mathrm{d}\psi=\Pi^*\omega$. Then, once again by Stokes' Theorem, we have
		\[
		\int_\sigma\omega = \int_{\hat\sigma}\Pi^*\omega = \int_{\hat\sigma}\mathrm{d}\psi = \psi(P)-\psi(Q)\in 2\pi\mathbb{Z},
		\]
		which implies that $\omega$ is integral.
	\end{proof}
	
	The next theorem provides necessary and sufficient conditions for the global hypoellipticity of the operator $\mathbb{L}$ given by \eqref{L}, under the assumption that the 1-form $\omega$ is real, closed, and belongs to $\mathsf{\Lambda}^1 C^\infty_{\partial M}(M)$. This result is an adapted version of a theorem by Bergamasco, Cordaro, and Malagutti \cite[Theorem 2.4]{BCM1993}.	
	
	\begin{theorem}\label{thm_MT}
		Let $\omega \in \mathsf{\Lambda}^1 C^\infty_{\partial M}(M)$ be a real-valued closed 1-form. Then, the operator 
			\begin{equation*}
			\mathbb{L}:C^\infty(M\times\mathbb{T})\to \mathsf{\Lambda}^{0,1} C^\infty(M\times\mathbb{T}),\quad \mathbb{L}u=\mathrm{d}_t u+\omega\wedge\partial_xu
			\end{equation*}
		is globally hypoelliptic if and only if $\omega$ is neither rational nor Liouville.
	\end{theorem}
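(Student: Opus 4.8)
The plan is to reduce the problem to a family of estimates on partial Fourier coefficients. Taking partial Fourier series in the torus variable, the hypothesis $\mathbb{L}u=f\in\mathsf{\Lambda}^{0,1}C^\infty(M\times\mathbb T)$ becomes the family $\mathbb{L}_\xi\widehat u_\xi=\mathrm d\widehat u_\xi+i\xi\omega\,\widehat u_\xi=\widehat f_\xi$, $\xi\in\mathbb Z$, and the Corollary preceding Lemma~\ref{lemma_BCM} already gives $\widehat u_\xi\in C^\infty(M)$. By Lemma~\ref{lemma_BCM}, global hypoellipticity is then \emph{equivalent} to the assertion that, on every chart $V$ and for every $N$, one has $\sup_V|\widehat u_\xi|\le C_N(1+|\xi|)^{-N}$ whenever $f$ is smooth. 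Using the Hodge Theorem I would first replace $\omega$ by its harmonic representative $\omega_h$: writing $\omega=\omega_h+\mathrm d\Omega$ with $\Omega\in C^\infty(M)$ real (so $|e^{i\xi\Omega}|=1$), the substitution $w=e^{i\xi\Omega}v$ conjugates $\mathbb{L}_\xi$ into $\mathrm d+i\xi\omega_h$, reducing everything to harmonic forms at the cost of unimodular factors whose derivatives grow only polynomially in $\xi$. Finally, by Theorem~\ref{thm_liou} and the fact that $I$ is an isometry from $\mathcal H^1(M)$ onto $\mathbb R^d$ (the chosen $\omega_k$ are orthonormal and $I([\omega_k])=e_k$), the assumption that $\omega$ is neither rational nor Liouville translates into a Diophantine lower bound: there are $C,M>0$ with $\operatorname{dist}(\xi\,I([\omega]),\mathbb Z^d)\ge C|\xi|^{-M}$ for all $\xi\neq0$.

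For \emph{necessity} I argue by contraposition, constructing in each case a distribution $u\notin C^\infty$ with $\mathbb Lu$ smooth. If $\omega$ is rational, pick $q$ with $q\omega$ integral; by Proposition~\ref{uni_cov} the functions $h_{q\ell}=e^{-iq\ell\psi}$ (with $\mathrm d\psi=\Pi^*\omega$ on the universal cover) descend to smooth unimodular functions on $M$ solving $\mathbb{L}_{q\ell}h_{q\ell}=0$; then $u=\sum_{\ell\ne0}h_{q\ell}(t)e^{iq\ell x}$ is a distribution with $\mathbb Lu=0$ but $\sup_V|\widehat u_{q\ell}|=1$, so $u\notin C^\infty$ by Lemma~\ref{lemma_BCM}. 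If $\omega$ is Liouville, take the integral forms $\theta_j$ and integers $q_j\to\infty$ of Definition~\ref{defi_liou}; the descended unimodular solutions $h_j=e^{-i\psi_{\theta_j}}$ satisfy
\[
\mathbb{L}_{q_j}h_j=i\,(q_j\omega-\theta_j)\,h_j,
\]
and the boundedness of $\{q_j^{\,j}(\omega-\theta_j/q_j)\}$ forces $\sup_K\|\partial^\alpha(q_j\omega-\theta_j)\|\lesssim q_j^{\,1-j}$ on every compact $K$, while $\sup_K\|\partial^\alpha h_j\|$ grows only polynomially in $q_j$. Hence $u=\sum_j h_j(t)e^{iq_j x}$ is a distribution whose image $\mathbb Lu$ has Fourier coefficients decaying faster than any power of $q_j$ (so $\mathbb Lu\in C^\infty$), yet $\sup_V|\widehat u_{q_j}|=1$, so again $u\notin C^\infty$.

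For \emph{sufficiency} I would, for each $\xi\ne0$, choose $p(\xi)\in\mathbb Z^d$ nearest to $\xi I([\omega])$ and let $\theta_\xi$ be the integral harmonic $1$-form with $I([\theta_\xi])=p(\xi)$; then $h_\xi=e^{-i\Psi_\xi}$ (with $\mathrm d\Psi_\xi=\Pi^*\theta_\xi$) descends and the substitution $v=h_\xi\tilde v$ turns $\mathrm d+i\xi\omega_h$ into $\mathrm d+i\mu_\xi$, where $\mu_\xi=\xi\omega_h-\theta_\xi$ is harmonic with, by the isometry, $\|\mu_\xi\|_{L^2}=\operatorname{dist}(\xi I([\omega]),\mathbb Z^d)\ge C|\xi|^{-M}$. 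The heart of the argument is then a uniform \emph{a priori estimate} for this flat, twisted operator: in the appropriate weighted scattering Sobolev spaces one shows
\[
\|\tilde v\|\le C\,|\xi|^{M'}\big(\|(\mathrm d+i\mu_\xi)\tilde v\|+\text{(rapidly controlled terms)}\big),
\]
with $M'$ independent of $\xi$, the factor $|\xi|^{M'}$ coming precisely from the lower bound $\|\mu_\xi\|_{L^2}\gtrsim|\xi|^{-M}$ on the bottom of the spectrum of the associated twisted Laplacian. Since $\widehat f_\xi$ is rapidly decreasing, feeding this estimate back (and undoing the unimodular conjugations, which cost only powers of $\xi$) yields rapid decay of $\widehat u_\xi$ in the relevant norms; scattering elliptic regularity then upgrades this to the pointwise bounds $\sup_V|\widehat u_\xi|\le C_N(1+|\xi|)^{-N}$ required by Lemma~\ref{lemma_BCM}, proving global hypoellipticity.

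The step I expect to be the main obstacle is this uniform lower bound on the twisted Laplacian $(\mathrm d+i\mu_\xi)^*(\mathrm d+i\mu_\xi)$. On a \emph{compact} manifold it is the eigenvalue-perturbation argument of \cite{BCM1993}: the bottom eigenvalue is isolated, depends analytically on the flat connection, and vanishes (to second order) exactly at integral classes, giving $\lambda_0(\mu_\xi)\gtrsim\|\mu_\xi\|^2$. On a scattering manifold the bottom of the spectrum is typically \emph{essential}, so this perturbation argument is unavailable and a function concentrating near the boundary could a priori defeat the bound. Overcoming this requires the scattering microlocal calculus together with the weighted Hodge Theorem quoted above: one must show that $\mathrm d+i\mu_\xi$ is Fredholm in weighted scattering Sobolev spaces, that its finite-dimensional approximate kernel is controlled by the nearby integral class, and that no spectrum accumulates at $0$ faster than $\|\mu_\xi\|^2\gtrsim|\xi|^{-2M}$. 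This is exactly where the non-compact geometry at infinity, rather than the arithmetic of $\omega$, carries the analytic weight of the proof.
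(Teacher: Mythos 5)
Your necessity half (the rational and Liouville constructions) is essentially the paper's own argument and is fine. The gap is in the sufficiency half, and it is exactly the step you yourself flag as the main obstacle: the uniform lower bound for the twisted operator, $\|\tilde v\|\le C|\xi|^{M'}\|(\mathrm{d}+i\mu_\xi)\tilde v\|$, equivalently a spectral bound $\lambda_0\bigl((\mathrm{d}+i\mu_\xi)^*(\mathrm{d}+i\mu_\xi)\bigr)\gtrsim\|\mu_\xi\|_{L^2}^2$, is never proved --- and in the unweighted $L^2$ form in which you state it, it is false on a scattering manifold. The forms $\mu_\xi$ are $L^2$-harmonic, hence decay at $\partial M$, so twisting by $i\mu_\xi$ does not move the essential spectrum: the bottom of the spectrum of the twisted Laplacian on functions is $0$ for every $\xi$ (test functions spread over large annuli near the boundary, where $\mu_\xi$ is uniformly small and gradients can be made small, give Weyl sequences). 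Thus no factor $C|\xi|^{M'}$ can rescue the estimate, which is precisely the scenario you describe as ``a function concentrating near the boundary could a priori defeat the bound''. Retreating to weighted scattering Sobolev spaces is a proposal, not a proof: the Fredholm and spectral analysis it would require is not supplied, and there is a second, independent problem --- $\widehat u_\xi$ is merely a smooth function on the non-compact manifold $M$ arising from an arbitrary $u\in\mathscr{D}'(M\times\mathbb{T})$, so it carries no growth control at infinity and need not lie in any weighted $L^2$ space on which such an a priori estimate could act. As written, the sufficiency direction is a program whose central analytic step is both missing and, in its stated form, unavailable.

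The paper closes this direction with no spectral theory at all, and the contrast is instructive. After the same conjugation, it integrates $\mathrm{d}(e^{i\xi\psi}\Pi^*\widehat u_\xi)=e^{i\xi\psi}\Pi^*\widehat f_\xi$ over lifts of the finitely many smooth loops $\sigma_1,\dots,\sigma_d$ based at $t_0$ and dual to $H^1_{\partial M}(M)$, which yields, whenever the denominator is nonzero,
\[
\widehat u_\xi(t_0)=\frac{-e^{-i\xi\psi(Q_0)}}{e^{i\xi(\psi(Q_0)-\psi(Q_\ell))}-1}\int_{Q_0}^{Q_\ell}e^{i\xi\psi}\,\Pi^*\widehat f_\xi .
\]
Non-rationality makes some denominator nonzero for each $\xi\neq0$; the non-Liouville hypothesis, through Theorem \ref{thm_liou} and the dichotomy borrowed from Bergamasco--Cordaro--Malagutti, bounds the best denominator below by $C|\xi|^{-L+1}$; and the numerator is a path integral of $\widehat f_\xi$ over a fixed compact loop, hence rapidly decreasing by smoothness of $f$. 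Rapid decay of $\widehat u_\xi(t_0)$, the local solution formula, and Lemma \ref{lemma_BCM} then give $u\in C^\infty$. Every estimate lives on compact sets; the non-compact geometry enters only once, through the scattering Hodge theorem identifying $H^1_{\partial M}(M)$ with $L^2$-harmonic forms, which is what makes those $d$ cycles (rather than all of $H_1$) the right ones to integrate over. In short: where your plan asks the analysis at infinity to carry the analytic weight of the proof, the paper arranges matters so that infinity never has to be estimated at all.
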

	
	\begin{proof}
		First, suppose that $\omega$ is rational and let $q \in \mathbb{N}$ be such that $q\omega$ is integral. Let $\psi \in C^\infty(\hat M)$ satisfy $\mathrm{d}\psi = \Pi^*(q\omega)$. By Proposition~\ref{uni_cov}, we may regard $e^{i\xi\psi}$ as a smooth function on $M$ for every $\xi \in \mathbb{Z}$, since $\psi(P) - \psi(Q) \in 2\pi\mathbb{Z}$ whenever $\Pi(P) = \Pi(Q)$. Consider the function
		\[
		u(t,x) = \sum_{\xi \in \mathbb{N}} e^{i\xi\psi(t)} e^{-i\xi qx}.
		\]
		Since the coefficients $e^{i\xi\psi}$ are bounded but do not decay as $|\xi| \to \infty$, we have $u \in \mathscr{D}'(M \times \mathbb{T}) \setminus C^\infty(M \times \mathbb{T})$. Moreover, it is straightforward to verify that $\mathbb{L}u = 0$, so that $\mathbb{L}$ is not globally hypoelliptic.
		
		Now suppose that $\omega$ is Liouville. Let $\{\theta_j\}$ and $\{q_j\}$ be as in Definition~\ref{defi_liou}. For each $j \in \mathbb{N}$, let $\psi_j \in C^\infty(\hat M)$ be such that $\mathrm{d}\psi_j = \Pi^*\theta_j$. Once again, by Proposition~\ref{uni_cov}, we have $e^{i\psi_j} \in C^\infty(M)$ for all $j \in \mathbb{N}$. Define
		\[
		u(t,x) = \sum_{j \in \mathbb{N}} e^{i\psi_j(t)} e^{-iq_jx}.
		\]
		As before, it is clear that $u \in \mathscr{D}'(M \times \mathbb{T}) \setminus C^\infty(M \times \mathbb{T})$. Moreover, we compute
		\begin{equation}\label{Lu_liou}
			\mathbb{L}u(t,x) = -\sum_{j \in \mathbb{N}} iq_j \left(\omega - \dfrac{\theta_j}{q_j}\right) e^{i\psi_j(t)} e^{-iq_jx}.
		\end{equation}
		
		In order to prove that $\mathbb{L}$ is not globally hypoelliptic, it suffices to show that the series in \eqref{Lu_liou} converges in $C^\infty(M \times \mathbb{T})$. Let $(V,t)$ be a local chart on $M$, and let $\hat V \subset \hat M$ be an open set such that $\Pi:\hat V \to V$ is a diffeomorphism. Since the sequence $\{q_j^j (\omega - \theta_j/q_j)\}$ is bounded in $\mathsf{\Lambda}^1 C^\infty(M)$, for each multi-index $\alpha \in \mathbb{N}_0^n$ and each compact set $K \subset V$, there exists a constant $C > 0$ such that, for all $j \in \mathbb{N}$,
		\[
		\sup_{t \in K} \left| \partial_t^\alpha \left(\omega - \dfrac{\theta_j}{q_j} \right) \right| \leq \dfrac{C}{q_j^j}.
		\]

		On the other hand, by the Faà di Bruno's Formula, for all $\beta\in\mathbb{N}_0^n$ and $\ell\in\mathbb{N}_0$, we have
		\begin{align*}
			\sup_{K\times\mathbb{T}}|\partial_x^\ell\partial_t^\beta e^{-i(q_jx-\psi_j(t))}| & \leq q_j^\ell\sup_{t\in K}|\partial_t^\beta e^{i\psi_j(t)}|\\
			& \leq q_j^\ell \sup_{K\times\mathbb{T}}\left|e^{i\psi_j(t)}\sum\prod \partial_t^{\nu}(i\psi_j\circ\Pi^{-1}(t))\right|\\
			& \leq C'q_j^\ell\left(1+\sum_{0<\nu\leq\beta}\sup_{t\in K}|\partial_t^{\nu}(\psi_j\circ\Pi^{-1})|\right),
		\end{align*}
		for some $C'>0$, where the sums in the second line are taken over partitions of the set $\{1,\dots,n\}$ and the products are taken over multi-indices satisfying certain conditions for each partition (see \cite{Hardy2006} for details). Now, notice that
		\[
		\theta_j=\mathrm{d}(\psi_j\circ\Pi^{-1})=\sum_{k=1}^n \partial_{t_k}(\psi_j\circ\Pi^{-1})\mathrm{d}t_k
		\]
		in $V$ and $\{\theta_j/q_j\}$ is bounded in $\mathsf{\Lambda}^1 C^\infty(M)$, which gives us
		\[
		1+\sum_{0<\nu\leq\beta}\sup_{t\in K}|\partial_t^{\nu}(\psi_j\circ\Pi^{-1})|\leq \tilde Cq_j,
		\]
		for some constant $\tilde C>0$. Then, there exists $C''>0$ such that, for all $j\in\mathbb{N}$,
		\[
		\sup_{K\times\mathbb{T}}|\partial_x^\ell\partial_t^{\beta}e^{-i(q_jx-\psi_j)}|\leq C''q_j^{\ell+1},
		\]
		which implies the smoothness of $\mathbb{L}u$. Therefore, $\mathbb{L}$ is not globally hypoelliptic.
		
		Finally, let us show that, if $\omega$ is neither rational nor Liouville, then $\mathbb{L}$ is globally hypoelliptic. Consider $u\in\mathscr{D}'(M\times\mathbb{T})$ and suppose that $\mathbb{L}u=f\in \mathsf{\Lambda}^{0,1} C^\infty(M\times\mathbb{T})$. Taking the partial Fourier series of $u$ and $f$, for each $\xi\in\mathbb{Z}$, we have
		\begin{equation*}\label{dteu_ef} 
			\mathrm{d}_t\widehat{u}_\xi + i\xi\omega\widehat{u}_\xi = \widehat{f}_\xi.
		\end{equation*}	
		
		Let $\psi\in C^\infty(\hat M)$ satisfy $\mathrm{d}\psi=\Pi^*\omega$, let $t_0\in M$, and choose a local chart $(V,t)$ at $t_0$ such that $V$ is diffeomorphic to an open ball in $\mathbb{R}^n$ and small enough so that there exists an open subset $\hat V\subset\hat M$ such that $\Pi:\hat V\to V$ is a diffeomorphism. Then, on $V$ we have
		\begin{equation}\label{deuef}
			\mathrm{d}_t(e^{i\xi\phi}\widehat{u}_\xi)=e^{i\xi\phi}\widehat{f}_\xi,
		\end{equation}
		for all $\xi\in\mathbb{Z}$, where $\phi=\psi\circ\Pi^{-1}$. Given $t\in V$, integrating \eqref{deuef} over the path $[t_0,t]$, by Stokes' Theorem, we have
		\[
		\int_{t_0}^t e^{i\xi\phi}\widehat{f}_\xi = e^{i\xi\phi(t)}\widehat{u}_\xi(t) = e^{i\xi\phi(t_0)}\widehat{u}_\xi(t_0),
		\]
		which gives us
		\[
		\widehat{u}_\xi(t) = e^{i\xi(\phi(t_0)-\phi(t))}\widehat{u}_\xi(t_0)+e^{-i\xi\phi(t)}\int_{t_0}^t e^{i\xi\phi}\widehat{f}_\xi.
		\]
		
		Since $|e^{i\xi(\phi(t_0)-\phi(t))}|=1$, by the smoothness of $f$ and Lemma \ref{lemma_BCM}, it is enough to show that the sequence $\{\widehat{u}_\xi(t_0)\}$ is rapidly decreasing.
		
		Let $\sigma_1,\dots,\sigma_d$ be the 1-cycles whose homological classes generate the subspace of $H_1(M;\mathbb{R})$ dual to $H^1_{\partial M}(M)$ as shown at the beginning of this section. By the Hurewicz Theorem, we may assume that the paths $\sigma_\ell$, $\ell=1,\dots,d$, are smooth loops based at $t_0$. Then, we lift them to paths $\hat\sigma_\ell:[0,1]\to\hat M$, $\ell=1,\dots,d$, with the same initial point $Q_0\in\hat M$, that is, $\hat\sigma_\ell(0)=Q_0$, for all $\ell=1,\dots,d$. Also, let us denote $Q_\ell=\hat\sigma_\ell(1)$, which of course satisfy $\Pi(Q_\ell)=t_0$, $\ell=1,\dots,d$. 
		
		Notice that, since $\mathrm{d}_t\Pi^*=\Pi^*\mathrm{d}_t$, \eqref{deuef} gives us, for all $\xi\in\mathbb{Z}$,
		\begin{equation}\label{deuef_pull}
			\mathrm{d}(e^{i\xi\psi}\Pi^*\widehat{u}_\xi)=e^{i\xi\psi}\Pi^*\widehat{f}_\xi.
		\end{equation}
		
		Integrating \eqref{deuef_pull} over each $\hat\sigma_\ell$, by Stokes' Theorem we obtain
		\begin{align*}
			\int_{Q_0}^{Q_\ell}e^{i\xi\psi}\Pi^*\widehat{f}_\xi & = e^{i\xi\psi(Q_\ell)}\Pi^*\widehat{u}_\xi(Q_\ell)-e^{i\xi\psi(Q_0)}\Pi^*\widehat{u}_\xi(Q_0)\\
			& = (e^{i\xi\psi(Q_\ell)}-e^{i\xi\psi(Q_0)})\widehat{u}_\xi(t_0).
		\end{align*}
		
		Now, if $\xi\in\mathbb{Z}\setminus\{0\}$ and $\ell=1,\dots,d$ are such that $e^{i\xi\psi(Q_\ell)}-e^{i\xi\psi(Q_0)}\neq 0$, the expression above gives us
		\begin{align}\label{u_t0}
			\widehat{u}_\xi(t_0)=\dfrac{-e^{-i\xi\psi(Q_0)}}{e^{i\xi(\psi(Q_0)-\psi(Q_\ell))}-1}\int_{Q_0}^{Q_\ell} e^{i\xi\psi}\Pi^*\widehat{f}_\xi.
		\end{align}
		
		Since $\omega$ is not rational, for each $\xi\neq 0$ there exists $\ell\in\{1,\dots,d\}$ such that $e^{i\xi\psi(Q_\ell)}-e^{i\xi\psi(Q_0)}\neq 0$. Also, by the smoothness of $f$, in order to show that $\{\widehat{u}_\xi(t_0)\}$ is rapidly decreasing, it is enough to see that there exist $C>0$ and $N\in\mathbb{N}_0$ such that, for all $\xi\in\mathbb{Z}\setminus\{0\}$ and $\ell=1,\dots,d$,
		\begin{align}\label{rapid_decay}
			|e^{i\xi(\psi(Q_0)-\psi(Q_\ell))}-1|\geq C|\xi|^{-N}.
		\end{align}
		
		Since $\omega$ is not Liouville, by Theorem \ref{thm_liou}, there exist $C,L>0$ such that, for all $p\in\mathbb{Z}^d$ and $q\in\mathbb{N}$, it holds
		\begin{align}\label{est_liou}
			\left|I([\omega])-\dfrac{p}{q}\right|\geq \dfrac{C}{q^L}.
		\end{align}
		
		Moreover, for each $\xi\in\mathbb{Z}\setminus\{0\}$, as in \cite[Theorem 2.4]{BCM1993}, one of the following conditions holds true:
		\begin{enumerate}
			\item[(a)] There exist $\ell\in\{1,\dots,d\}$ and $C>0$ such that
			\[|e^{i\xi(\psi(Q_0)-\psi(Q_\ell))}-1|\geq C;\]
			\item[(b)] For each $\ell\in\{1,\dots,d\}$, there exists $p_\ell\in\mathbb{Z}$ such that
			\[|e^{i\xi(\psi(Q_0)-\psi(Q_\ell))}-e^{i2\pi p_\ell}|\geq \dfrac{1}{2}|\xi(\psi(Q_\ell)-\psi(Q_0))-2\pi p_\ell|.\]
		\end{enumerate}
		
		If (a) holds, then \eqref{rapid_decay} holds for $N=0$. If (b) holds, we have by \eqref{est_liou} that
		\begin{align*}
			\max_{1\leq\ell\leq d}|e^{i\xi(\psi(Q_0)-\psi(Q_\ell))}-1| 
			& = \max_{1\leq\ell\leq d}|e^{-i2\pi p_\ell}||e^{i\xi(\psi(Q_0)-\psi(Q_\ell))}-e^{i2\pi p_\ell}|\\
			& \geq \frac{1}{2}\max_{1\leq\ell\leq d}|\xi(\psi(Q_0)-\psi(Q_\ell))-2\pi p_\ell|\\
			& \geq C'|\xi|\max_{1\leq\ell\leq d}\left|\dfrac{1}{2\pi}\int_{\sigma_\ell}\omega - \dfrac{p_\ell}{|\xi|}\right|\\
			& \geq C''|\xi|\left|I([\omega]) - \dfrac{p}{|\xi|}\right| \geq C|\xi|^{-L+1},
		\end{align*}
		where $p=(p_1,\dots,p_d)\in\mathbb{Z}^d$ and $\xi\neq 0$.
		
		Therefore, the denominator in \eqref{u_t0} is bounded below by a polynomial rate in $|\xi|$, while the numerator decays faster than any polynomial due to the smoothness of the data $f$. Indeed, observe first that
		\[
		\int_{Q_0}^{Q_\ell} e^{i\xi\psi} \Pi^* \widehat{f}_\xi 
		= \int_0^{2\pi} e^{i\xi(\psi \circ \hat\sigma_\ell)} \hat\sigma_\ell^* \Pi^* \widehat{f}_\xi 
		= \int_0^{2\pi} \sigma_\ell^* \widehat{f}_\xi.
		\]
		
		Since $\sigma_\ell$ is compact, let $\{U_i\}_{i \in I}$ be a finite collection of coordinate charts covering its image, and choose compact sets $K_i \subset \subset U_i$ whose interiors still cover $\sigma_\ell([0,2\pi])$. Consider a partition
		\[
		0 = \tau_0 < \tau_1 < \cdots < \tau_L = 2\pi
		\]
		such that, for each $r = 1, \dots, L$, the image $\sigma_\ell([\tau_{r-1}, \tau_r])$ is entirely contained in the interior of some $K_i$, with $i$ depending on $r$. Let $(t_1, \dots, t_n)$ be a system of local coordinates on $K_i$; then, over the interval $[\tau_{r-1}, \tau_r] \subset \mathbb{R}$, we can write
		\[
		\sigma_\ell^* \widehat{f}_\xi = \sum_{j=1}^n ((\widehat{f_j})_\xi \circ \sigma_\ell)\, \mathrm{d}(t_j \circ \sigma_\ell) = \sum_{j=1}^n ((\widehat{f_j})_\xi \circ \sigma_\ell)\, g_j\, \mathrm{d}\tau,
		\]
		for some continuous functions $g_j$ defined on $[\tau_{r-1}, \tau_r]$. Hence,
		\[
		\int_{\tau_{r-1}}^{\tau_r} e^{i\xi(\psi \circ \hat\sigma_\ell)} \sigma_\ell^* \widehat{f}_\xi 
		= \sum_{j=1}^n \int_{\tau_{r-1}}^{\tau_r} e^{i\xi(\psi \circ \hat\sigma_\ell)}\, (\widehat{f_j})_\xi(\sigma_\ell(\tau))\, g_j(\tau)\, \mathrm{d}\tau.
		\]
		
		Since $f$ is smooth, given any $N \in \mathbb{N}_0$, there exists a constant $C > 0$ independent of $r$ such that, for all $\xi \in \mathbb{Z}$,
		\[
		\sup_{1 \leq j \leq n} \sup_{t \in K_i} \left|(\widehat{f_j})_\xi(t)\right| \leq C (1 + |\xi|)^{-N}.
		\]
		
		Therefore,
		\[
		\left| \int_{\tau_{r-1}}^{\tau_r} e^{i\xi(\psi \circ \hat\sigma_\ell)} \sigma_\ell^* \widehat{f}_\xi \right| 
		\leq \sum_{j=1}^n \int_{\tau_{r-1}}^{\tau_r} \left| (\widehat{f_j})_\xi(\sigma_\ell(\tau)) \right| \left| g_j(\tau) \right| \mathrm{d}\tau 
		\leq C' (1 + |\xi|)^{-N},
		\]
		for some constant $C' > 0$ independent of $\xi \in \mathbb{Z}$ and $r \in \{1, \dots, L\}$. Consequently,
		\[
		\left| \int_{Q_0}^{Q_\ell} e^{i\xi\psi} \Pi^* \widehat{f}_\xi \right| 
		\leq \sum_{r=1}^L \left| \int_{\tau_{r-1}}^{\tau_r} e^{i\xi(\psi \circ \hat\sigma_\ell)} \sigma_\ell^* \widehat{f}_\xi \right| 
		\leq C'' (1 + |\xi|)^{-N},
		\]
		where $C'' = C' L > 0$. Hence, the sequence $\{ \widehat{u}_\xi(t_0) \}_{\xi \in \mathbb{Z}}$ decays faster than any polynomial, which implies that $u$ is smooth. This proves that $\mathbb{L}$ is globally hypoelliptic.
	\end{proof}

%====================================	
%====================================	
	\section{Involutive Systems on $M\times\mathbb{T}^m$, $m>1$}
%====================================	
%====================================	
	
	Let now $\omega_1,\dots,\omega_m\in \mathsf{\Lambda}^1 C^\infty_{\partial M}(M)$ be a family of real-valued closed 1-forms on $M$. In this section, inspired by the results of \cite{ADL2023gh}, we provide a characterization of the global regularity of the operator
	\begin{equation}\label{L_m}
		\mathbb{L}:\mathsf{\Lambda}^{0,1} C^\infty(M\times\mathbb{T}^m)\to C^\infty(M\times\mathbb{T}^m),\quad \mathbb{L}u = \mathrm{d}_tu+\sum_{k=1}^{m}\omega_k\wedge\partial_{x_k}u.
	\end{equation}
	
	\begin{definition}\label{defi_liou_Tm}
		Let us denote $\boldsymbol{\omega}=(\omega_1,\dots,\omega_m)$. We say that the family $\boldsymbol{\omega}$ is:
		\begin{itemize}
			\item[(i)] \emph{rational} if there exists $\xi\in\mathbb{Z}^m\setminus\{0\}$ such that
			\[\xi\cdot\boldsymbol{\omega} := \sum_{k=1}^m\xi_k\omega_k\]
			is an integral 1-form;
			\item[(ii)] \emph{Liouville} if $\boldsymbol{\omega}$ is not rational and there exist a sequence of closed integral 1-forms $\{\theta_j\}$ and a sequence $\{\xi_j\}\subset\mathbb{Z}^m$, with $|\xi_j|\to\infty$, such that the sequence $\{|\xi_j|^j(\xi_j\cdot\boldsymbol{\omega}-\theta_j)\}$ is bounded in $\mathsf{\Lambda}^1 C^\infty(M)$.
		\end{itemize}
	\end{definition}
	
	Let $\sigma_1,\dots,\sigma_d$ be the 1-cycles on $M$ defined at the beginning of the previous section. As in \cite{ADL2023gh}, we associate to $\boldsymbol{\omega}$ a matrix of cycles $A(\boldsymbol{\omega})\in M_{d\times m}(\mathbb{R})$ with entries given by
	\[
	A(\boldsymbol{\omega})_{\ell k} = \frac{1}{2\pi}\int_{\sigma_\ell}\omega_k,\quad \ell=1,\dots,d,\ k=1,\dots,m,
	\]
	that is, for all $\xi\in\mathbb{Z}^m$,
	\[
	A(\boldsymbol{\omega})\xi = \frac{1}{2\pi}\left(\int_{\sigma_1}\xi\cdot\boldsymbol{\omega},\dots, \int_{\sigma_d}\xi\cdot\boldsymbol{\omega}\right).
	\]
	
	Note that the definition of $A(\boldsymbol{\omega})$ depends only on the cohomology classes $[\omega_1], \dots, [\omega_m]$ in $H^1_{\partial M}(M)$. Hence, the assignment $\boldsymbol{\omega}\mapsto A(\boldsymbol{\omega})$ defines a linear map
	\[
	A:H^1_{\partial M}(M)^m\to M_{d\times m}(\mathbb{R}).
	\]
	
	\begin{definition}
		We say that a matrix $\boldsymbol{A}\in M_{d\times m}(\mathbb{R})$ satisfies the Diophantine condition $(\mathrm{DC})$ if and only if there exist constants $C,\rho>0$ such that
		\begin{equation}\label{DC}
			|\eta+\boldsymbol{A}\xi|\geq C(|\eta|+|\xi|)^{-\rho},\tag{DC}
		\end{equation}
		for all $(\xi,\eta)\in\mathbb{Z}^m\times\mathbb{Z}^d\setminus\{(0,0)\}$.
	\end{definition}
	
	\begin{lemma}\label{lemma_dioph}
		If a matrix $\boldsymbol{A} \in M_{d\times m}(\mathbb{R})$ satisfies \eqref{DC}, then $\boldsymbol{A}$ also satisfies the following condition: there exist constants $N \in \mathbb{N}$ and $C > 0$ such that, for all $\xi \in \mathbb{Z}^m \setminus \{0\}$, we have
		\begin{equation}\label{DC_equiv}
			\max_{1\leq \ell \leq d} |1 - e^{2\pi i \xi \cdot a_\ell}| \geq C(1 + |\xi|)^{-N},
		\end{equation}
		where $a_\ell \in \mathbb{R}^m$ denotes the $\ell$-th row of $\boldsymbol{A}$.
	\end{lemma}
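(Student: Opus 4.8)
The plan is to reduce everything to a single elementary fact about the complex exponential and then feed a suitable integer lattice point into the Diophantine condition \eqref{DC}. The starting point is the identity $|1-e^{2\pi i\theta}| = 2|\sin(\pi\theta)|$, valid for every $\theta\in\mathbb{R}$, together with the elementary bound $\sin(\pi s)\geq 2s$ for $s\in[0,\tfrac12]$. Writing $\|\theta\| := \operatorname{dist}(\theta,\mathbb{Z})\in[0,\tfrac12]$ and noting $|\sin(\pi\theta)| = \sin(\pi\|\theta\|)$, this yields the key inequality
\[
|1-e^{2\pi i\theta}| = 2\sin(\pi\|\theta\|)\geq 4\,\|\theta\|,\qquad\theta\in\mathbb{R}.
\]
Applying it with $\theta = \xi\cdot a_\ell = (\boldsymbol{A}\xi)_\ell$ and taking the maximum over $\ell$, I would obtain
\[
\max_{1\leq\ell\leq d}|1-e^{2\pi i\,\xi\cdot a_\ell}| \geq 4\max_{1\leq\ell\leq d}\operatorname{dist}\bigl((\boldsymbol{A}\xi)_\ell,\mathbb{Z}\bigr),
\]
so that it suffices to bound the right-hand side from below, i.e.\ to show that the sup-norm distance of the vector $\boldsymbol{A}\xi\in\mathbb{R}^d$ to the lattice $\mathbb{Z}^d$ decays at most polynomially in $|\xi|$.

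Next I would connect this distance to \eqref{DC}. Let $\eta\in\mathbb{Z}^d$ be the integer vector obtained by rounding each component of $-\boldsymbol{A}\xi$ to a nearest integer; then $|(\eta+\boldsymbol{A}\xi)_\ell| = \operatorname{dist}\bigl((\boldsymbol{A}\xi)_\ell,\mathbb{Z}\bigr)$ for every $\ell$, whence
\[
|\eta+\boldsymbol{A}\xi| = \Bigl(\sum_{\ell=1}^d\operatorname{dist}\bigl((\boldsymbol{A}\xi)_\ell,\mathbb{Z}\bigr)^2\Bigr)^{1/2}\leq \sqrt{d}\,\max_{1\leq\ell\leq d}\operatorname{dist}\bigl((\boldsymbol{A}\xi)_\ell,\mathbb{Z}\bigr).
\]
Since $\xi\neq 0$ the pair $(\xi,\eta)$ is nonzero, so \eqref{DC} applies and gives $|\eta+\boldsymbol{A}\xi|\geq C(|\eta|+|\xi|)^{-\rho}$. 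Combining the last two displays produces a lower bound for $\max_\ell\operatorname{dist}\bigl((\boldsymbol{A}\xi)_\ell,\mathbb{Z}\bigr)$ of the form $\tfrac{C}{\sqrt{d}}(|\eta|+|\xi|)^{-\rho}$.

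The final step is to eliminate $\eta$ from the estimate. By construction $|\eta_\ell|\leq|(\boldsymbol{A}\xi)_\ell|+\tfrac12\leq|a_\ell|\,|\xi|+\tfrac12$ by Cauchy--Schwarz, so $|\eta|\leq C_1|\xi|+C_2$ and hence $|\eta|+|\xi|\leq C_3(1+|\xi|)$ for constants depending only on $\boldsymbol{A}$ and $d$. Substituting this and choosing any $N\in\mathbb{N}$ with $N\geq\rho$ (so that $(1+|\xi|)^{-\rho}\geq(1+|\xi|)^{-N}$) converts the lower bound into a constant multiple of $(1+|\xi|)^{-N}$, which establishes \eqref{DC_equiv}. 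I expect the only point requiring genuine care to be this passage between the Euclidean norm appearing in \eqref{DC} and the componentwise distances produced by the factors $|1-e^{2\pi i\,\xi\cdot a_\ell}|$, i.e.\ the interplay of the $\ell^2$ and $\ell^\infty$ norms on $\mathbb{R}^d$ together with the linear control of the rounding vector $\eta$ by $\xi$; every remaining inequality is elementary and dimension-independent up to harmless constants.
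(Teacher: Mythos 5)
Your proof is correct and uses the same essential mechanism as the paper's: the elementary comparison $|1-e^{2\pi i\theta}|\geq c\,\operatorname{dist}(\theta,\mathbb{Z})$ combined with an application of \eqref{DC} to the pair $(\xi,\eta)$, where $\eta\in\mathbb{Z}^d$ is the nearest-integer rounding of $-\boldsymbol{A}\xi$. The only difference is organizational — the paper runs these ingredients as a proof by contradiction along a sequence $\{\xi_j\}$, while you argue directly — and your version has the merit of making explicit the bound $|\eta|\leq C_1|\xi|+C_2$ (hence $|\eta|+|\xi|\leq C_3(1+|\xi|)$), which the paper's final contradiction step uses only implicitly.
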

	
	\begin{proof}
		This result can be seen as a smooth version of \cite[Lemma 8.1]{DM2020}. Suppose, by contradiction, that condition \eqref{DC_equiv} does not hold. Then, there exists a sequence $\{\xi_j\}_{j\in\mathbb{N}} \subset \mathbb{Z}^m$ such that $|\xi_{j+1}| > |\xi_j|$ for all $j \in \mathbb{N}$, and
		\begin{equation}\label{n_DC}
			\max_{1 \leq \ell \leq d} |e^{2\pi i \xi_j \cdot a_\ell}| < (1 + |\xi_j|)^{-j}.
		\end{equation}
		
		For each $j \in \mathbb{N}$, define $\eta_j = (\eta_j^{(1)}, \dots, \eta_j^{(d)}) \in \mathbb{Z}^d$ by setting $\eta_j^{(\ell)} = \lfloor \xi_j \cdot a_\ell \rfloor$, i.e., the integer part of $\xi_j \cdot a_\ell$. It follows from \eqref{n_DC} and the growth condition on $\{\xi_j\}$ that $\xi_j \cdot a_\ell + \eta_j^{(\ell)} \to 0$ as $j \to \infty$. Consequently, we obtain
		\[
		\pi |\xi_j \cdot a_\ell + \eta_j^{(\ell)}| \leq |e^{2\pi i (\xi_j \cdot a_\ell + \eta_j^{(\ell)})}| < (1 + |\xi_j|)^{-j},
		\]
		which contradicts condition \eqref{DC}. Therefore, \eqref{DC_equiv} must hold.
	\end{proof}
	
	The following result is an adaptation of Proposition 4.1 from \cite{ADL2023gh}:
	
	\begin{proposition}
		The family $\boldsymbol{\omega}$ is:
		\begin{itemize}
			\item[(i)] rational if and only if $A(\boldsymbol{\omega})(\mathbb{Z}^m \setminus \{0\}) \cap \mathbb{Z}^d \neq \varnothing$;
			\item[(ii)] Liouville if and only if $\boldsymbol{\omega}$ is not rational and $A(\boldsymbol{\omega})$ does not satisfy \eqref{DC}. 
		\end{itemize}
	\end{proposition}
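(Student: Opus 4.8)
My plan is to convert both arithmetic conditions of Definition~\ref{defi_liou_Tm} into statements about $A(\boldsymbol{\omega})$ by means of the period map $I$ and the single-form Theorem~\ref{thm_liou}. The basic identity I will use repeatedly is that $A(\boldsymbol{\omega})\xi=I([\xi\cdot\boldsymbol{\omega}])$ for every $\xi\in\mathbb{Z}^m$, since the $\ell$-th coordinate of each side is $\frac{1}{2\pi}\int_{\sigma_\ell}\xi\cdot\boldsymbol{\omega}$. Observe also that $\xi\cdot\boldsymbol{\omega}$, being a $\mathbb{Z}$-linear combination of the $\omega_k\in\mathsf{\Lambda}^1 C^\infty_{\partial M}(M)$, again lies in $\mathsf{\Lambda}^1 C^\infty_{\partial M}(M)$, so Theorem~\ref{thm_liou} is applicable to it. Item (i) is then immediate: $\boldsymbol{\omega}$ is rational precisely when some $\xi\in\mathbb{Z}^m\setminus\{0\}$ makes $\xi\cdot\boldsymbol{\omega}$ integral, which by Theorem~\ref{thm_liou}(i) happens exactly when $I([\xi\cdot\boldsymbol{\omega}])=A(\boldsymbol{\omega})\xi\in\mathbb{Z}^d$; this is the statement $A(\boldsymbol{\omega})(\mathbb{Z}^m\setminus\{0\})\cap\mathbb{Z}^d\neq\varnothing$.

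For the forward direction of (ii), I would start from a Liouville family: integral closed forms $\theta_j$ and $\xi_j$ with $|\xi_j|\to\infty$ such that $\{|\xi_j|^j(\xi_j\cdot\boldsymbol{\omega}-\theta_j)\}$ is bounded in $\mathsf{\Lambda}^1 C^\infty(M)$. Integration over each cycle $\sigma_\ell$ is continuous for the Fréchet topology, so, writing $\eta_j:=-I([\theta_j])$, which lies in $\mathbb{Z}^d$ because $\theta_j$ is integral, I obtain a uniform bound $|\xi_j|^j\,|A(\boldsymbol{\omega})\xi_j+\eta_j|\le C$, i.e. $|A(\boldsymbol{\omega})\xi_j+\eta_j|\le C|\xi_j|^{-j}$. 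Since $|\eta_j|\le\|A(\boldsymbol{\omega})\|\,|\xi_j|+C$ grows at most linearly in $|\xi_j|$, for any fixed $\rho>0$ the product $|A(\boldsymbol{\omega})\xi_j+\eta_j|\,(|\xi_j|+|\eta_j|)^{\rho}$ is $O(|\xi_j|^{\rho-j})\to 0$, which violates \eqref{DC} for $j$ large; thus $A(\boldsymbol{\omega})$ fails \eqref{DC}, and non-rationality is built into the definition.

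The reverse direction of (ii) is the main work. Assuming $\boldsymbol{\omega}$ is not rational and that \eqref{DC} fails, I take $C=1$ and $\rho=j$ in the negation of \eqref{DC} to get, for each $j$, a pair $(\xi_j,\eta_j)\neq(0,0)$ with $|A(\boldsymbol{\omega})\xi_j+\eta_j|<(|\xi_j|+|\eta_j|)^{-j}$. One checks $\xi_j\neq0$ (a nonzero $\eta_j$ with $\xi_j=0$ would give the impossible chain $1\le|\eta_j|<|\eta_j|^{-j}\le1$) and that $\{\xi_j\}$ is unbounded: otherwise $\{\eta_j\}$ is bounded too, and a constant subsequence $(\xi_\ast,\eta_\ast)$ would force $A(\boldsymbol{\omega})\xi_\ast=-\eta_\ast\in\mathbb{Z}^d$ with $\xi_\ast\neq0$, making $\boldsymbol{\omega}$ rational by (i). Passing to a subsequence with $|\xi_j|\to\infty$ and relabeling, I have $|A(\boldsymbol{\omega})\xi_j-p_j|<|\xi_j|^{-j}$ with $p_j:=-\eta_j\in\mathbb{Z}^d$. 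To build the required forms, I use the Hodge Theorem to write $\xi_j\cdot\boldsymbol{\omega}=\xi_j\cdot\boldsymbol{\omega}^{\mathrm h}+\mathrm d\zeta_j$, where $\xi_j\cdot\boldsymbol{\omega}^{\mathrm h}\in\mathcal{H}^1(M)$ is the harmonic representative and $\zeta_j\in C^\infty(M)$; I let $\vartheta_j\in\mathcal{H}^1(M)$ be the unique harmonic form with $I([\vartheta_j])=p_j$, which is integral by Theorem~\ref{thm_liou}(i), and set $\theta_j:=\vartheta_j+\mathrm d\zeta_j$. Then $\theta_j$ is closed and integral, since adding the exact form $\mathrm d\zeta_j$ does not change periods, while the exact parts cancel, leaving $\xi_j\cdot\boldsymbol{\omega}-\theta_j=\xi_j\cdot\boldsymbol{\omega}^{\mathrm h}-\vartheta_j\in\mathcal{H}^1(M)$ with $I([\xi_j\cdot\boldsymbol{\omega}^{\mathrm h}-\vartheta_j])=A(\boldsymbol{\omega})\xi_j-p_j$. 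Since $\mathcal{H}^1(M)$ is finite-dimensional and $I$ restricts to a linear homeomorphism $\mathcal{H}^1(M)\to\mathbb{R}^d$, every Fréchet seminorm on $\mathcal{H}^1(M)$ is dominated by $|I(\cdot)|$; hence $|\xi_j|^j(\xi_j\cdot\boldsymbol{\omega}-\theta_j)$, whose period vector has norm $|\xi_j|^j|A(\boldsymbol{\omega})\xi_j-p_j|<1$, is bounded in $\mathsf{\Lambda}^1 C^\infty(M)$. This exhibits $\boldsymbol{\omega}$ as Liouville.

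The step I expect to be the main obstacle is exactly this upgrade from a bound on periods to boundedness in the full Fréchet topology in the reverse direction of (ii). The resolution hinges on two points that must be combined correctly: absorbing the index-dependent exact part $\mathrm d\zeta_j$ into the integral approximant $\theta_j$, so that the difference $\xi_j\cdot\boldsymbol{\omega}-\theta_j$ is genuinely harmonic rather than merely cohomologous to a harmonic form, and then invoking the finite-dimensionality of $\mathcal{H}^1(M)$ (via the Hodge Theorem) to equate the Fréchet topology with the period norm on that subspace. A secondary subtlety, also handled above, is using non-rationality precisely to guarantee $|\xi_j|\to\infty$ when extracting the sequence from the failure of \eqref{DC}.
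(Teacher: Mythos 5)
Your proof is correct and follows essentially the same route as the paper's: both arguments reduce everything to the period data over the dual cycles $\sigma_1,\dots,\sigma_d$, realize the integral approximants by harmonic representatives via the Hodge Theorem, and use the finite-dimensionality of $\mathcal{H}^1(M)$ (equivalently, that $I$ restricts to a linear isomorphism onto $\mathbb{R}^d$) to convert smallness of periods into boundedness in the Fréchet topology. In particular, your $\theta_j=\vartheta_j+\mathrm{d}\zeta_j$ is literally the paper's $\theta_j=-\sum_\ell(\eta_j)_\ell\vartheta_\ell+\sum_k(\xi_j)_k\,\mathrm{d}v_k$, and your direct growth estimate in the direction ``Liouville $\Rightarrow$ not \eqref{DC}'' is a slightly cleaner version of the paper's subsequence normalization.

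One fine point, which your write-up shares with the paper (where it is treated even more loosely): in the extraction of $|\xi_j|\to\infty$, the constant-subsequence argument forces $A(\boldsymbol{\omega})\xi_\ast=-\eta_\ast\in\mathbb{Z}^d$ only when $|\xi_\ast|+|\eta_\ast|\geq 2$, because $(|\xi_\ast|+|\eta_\ast|)^{-j}\to 0$ fails in the corner case $|\xi_\ast|=1$, $\eta_\ast=0$, where your inequality with $C=1$ yields only $|A(\boldsymbol{\omega})\xi_\ast|<1$ and no contradiction. This is repaired by instantiating the negation of \eqref{DC} with $C=1/j$ and $\rho=j$ instead of $C=1$: then along a constant subsequence one gets $|A(\boldsymbol{\omega})\xi_\ast+\eta_\ast|<1/j\to 0$, hence $A(\boldsymbol{\omega})\xi_\ast=-\eta_\ast$, and non-rationality (via item (i)) gives the contradiction in all cases.
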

	
	\begin{proof}
		(i) If $\boldsymbol{\omega}$ is rational, then there exists $\xi \in \mathbb{Z}^m \setminus \{0\}$ such that $A(\boldsymbol{\omega})\xi \in \mathbb{Z}^d$.
		
		Conversely, suppose there exists $\xi \in \mathbb{Z}^m \setminus \{0\}$ such that $A(\boldsymbol{\omega})\xi \in \mathbb{Z}^d$. Then,
		\[
		\dfrac{1}{2\pi}\int_{\sigma_\ell} \xi \cdot \boldsymbol{\omega} \in \mathbb{Z},\quad \ell = 1, \dots, d,
		\]
		which implies
		\[
		\dfrac{1}{2\pi}\int_{\sigma} \xi \cdot \boldsymbol{\omega} \in \mathbb{Z},
		\]
		for every 1-cycle $\sigma$, as in the previous section.
		
		(ii) By the Hodge theorem, there exist $\vartheta_1, \dots, \vartheta_d \in \mathcal{H}^1(M)$ such that $\{[\vartheta_1], \dots, [\vartheta_d]\}$ forms a basis for $H^1_{\partial M}(M)$ dual to the paths $\sigma_1, \dots, \sigma_d$. For each $k = 1, \dots, m$, we can write
		\begin{equation}\label{decomp_omega_k}
			\omega_k = \sum_{\ell=1}^{d} \lambda_{\ell k} \vartheta_\ell + \mathrm{d}v_k,
		\end{equation}
		for some $v_k \in C^\infty(M)$. Applying $\int_{\sigma_\ell}$ to both sides of \eqref{decomp_omega_k}, for $\ell = 1, \dots, d$, yields
		\[
		\lambda_{\ell k} = A(\boldsymbol{\omega})_{\ell k}, \quad k = 1, \dots, m,\ \ell = 1, \dots, d.
		\]
		
		For each $\xi \in \mathbb{Z}^m$, we obtain
		\begin{equation}\label{omega}
			\xi \cdot \boldsymbol{\omega} = \sum_{\ell=1}^{d} \left( \sum_{k=1}^{m} A(\boldsymbol{\omega})_{\ell k} \xi_k \right) \vartheta_\ell + \sum_{k=1}^{m} \xi_k \mathrm{d}v_k.
		\end{equation}
		
		Suppose $A(\boldsymbol{\omega})$ does not satisfy condition \eqref{DC}. Then, there exists a sequence $\{(\eta_j, \xi_j)\}_{j \in \mathbb{N}}$  in $\mathbb{Z}^d \times (\mathbb{Z}^m \setminus \{0\})$ such that, for all $j \in \mathbb{N}$,
		\begin{equation}\label{not_DC}
			|\eta_j + A(\boldsymbol{\omega})\xi_j| < (|\eta_j| + |\xi_j|)^{-j}.
		\end{equation}
		
		We first show that the sequence $\{\xi_j\}$ cannot be bounded. Indeed, if it were, then
		\[
		|\eta_j + A(\boldsymbol{\omega})\xi_j| \geq |\eta_j| - C,
		\]
		for some $C > 0$, which would imply that $\{\eta_j\}$ is also bounded, contradicting \eqref{not_DC}.
		
		Now, for each $j \in \mathbb{N}$, define the 1-form
		\begin{equation}\label{vartheta}
			\theta_j = -\sum_{\ell=1}^{d} (\eta_j)_\ell \vartheta_\ell + \sum_{k=1}^{m} (\xi_j)_k \mathrm{d}v_k.
		\end{equation}
		
		Notice that each $\theta_j$ is integral. Indeed, each component $(\eta_j)_\ell$ belongs to $\mathbb{Z}$, each $\vartheta_\ell$ was chosen to be integral, and each $\mathrm{d}v_k$ is integral by exactness. Using \eqref{omega} and \eqref{vartheta}, we have
		\begin{align*}
			\rho_j & := |\xi_j|^j(\xi_j \cdot \boldsymbol{\omega} - \theta_j)\\
			& = |\xi_j|^j\left[\sum_{\ell=1}^{d}\left(\sum_{k=1}^{m}A(\boldsymbol{\omega})_{\ell k}(\xi_j)_k\right)\vartheta_\ell + \sum_{k=1}^{m}(\xi_j)_k\,\mathrm{d}v_k + \sum_{\ell=1}^{d}(\eta_j)_\ell\vartheta_\ell - \sum_{k=1}^{m}(\xi_j)_k\,\mathrm{d}v_k\right]\\
			& = |\xi_j|^j \sum_{\ell=1}^{d} \left(\sum_{k=1}^{m} A(\boldsymbol{\omega})_{\ell k}(\xi_j)_k + (\eta_j)_\ell\right)\vartheta_\ell.
		\end{align*}
		
		To show that $\boldsymbol{\omega}$ is Liouville, it suffices to prove that the sequence $\{\rho_j\}$ is bounded in $\mathsf{\Lambda}^1 C^\infty(M)$. Let $(V,t)$ be a local chart, and let $K \subset V$ be a compact subset. Then, for each multi-index $\alpha \in \mathbb{N}_0^n$, we have
		\begin{align*}
			\sup_{t \in K} |\partial_t^\alpha \rho_j| 
			& = |\xi_j|^j \sum_{\ell=1}^{d} \left| \sum_{k=1}^{m} A(\boldsymbol{\omega})_{\ell k}(\xi_j)_k + (\eta_j)_\ell \right| \cdot |\partial_t^\alpha \vartheta_\ell| \\
			& \leq C |\xi_j|^j \left| \eta_j + A(\boldsymbol{\omega})\xi_j \right| \ <\ +\infty,
		\end{align*}
		by \eqref{not_DC} and the fact that each $\vartheta_\ell$ is smooth (since it is harmonic). Therefore, $\boldsymbol{\omega}$ is Liouville.

		Conversely, suppose that $\boldsymbol{\omega}$ is Liouville. Then there exist a sequence $\{\xi_j\}$ in $\mathbb{Z}^m$, with $|\xi_j| \to \infty$, and a sequence of closed integral 1-forms $\{\theta_j\}$ such that $\{|\xi_j|^j(\xi_j \cdot \boldsymbol{\omega} - \theta_j)\}$ is bounded in $\mathsf{\Lambda}^1 C^\infty(M)$. As before, write
		\begin{equation}\label{theta}
			\theta_j = \sum_{\ell=1}^{d} \mu_{j\ell} \vartheta_\ell + \mathrm{d}w_j,
		\end{equation}
		with $\mu_{j\ell} \in \mathbb{Z}$ and $w_j \in C^\infty(M)$. Then, combining equations \eqref{omega} and \eqref{theta}, we have
		\begin{equation}\label{rho_j}
			\rho_j = |\xi_j|^j \sum_{\ell=1}^{d} \left( \sum_{k=1}^{m} A(\boldsymbol{\omega})_{\ell k} (\xi_j)_k - \mu_{j\ell} \right) \vartheta_\ell + |\xi_j|^j \left( \sum_{k=1}^{m} (\xi_j)_k \mathrm{d}v_k - \mathrm{d}w_j \right).
		\end{equation}
		
		Integrating \eqref{rho_j} over each 1-cycle $\sigma_\ell$, for $\ell = 1, \dots, d$, we obtain
		\[
		\frac{1}{2\pi} \int_{\sigma_\ell} \rho_j = |\xi_j|^j \left( \sum_{k=1}^{m} A(\boldsymbol{\omega})_{\ell k} (\xi_j)_k - \mu_{j\ell} \right).
		\]
		
		Since $\{\rho_j\}$ is bounded in $\mathsf{\Lambda}^1 C^\infty(M)$, there exists $C > 0$ such that
		\[
		\left| \int_{\sigma_\ell} \rho_j \right| \leq C,
		\]
		for all $\ell$ and $j$. Defining $\eta_j := -(\mu_{j1}, \dots, \mu_{jd}) \in \mathbb{Z}^d$, we conclude
		\[
		|\eta_j + A(\boldsymbol{\omega}) \xi_j| \leq C' |\xi_j|^{-j},
		\]
		for some $C' > 0$. Since $|\xi_j| \to \infty$, passing to a subsequence if necessary, we may assume $|\xi_j|^{-j/2} \leq (C'j)^{-1}$. Hence,
		\[
		|\eta_j + A(\boldsymbol{\omega}) \xi_j| \leq \frac{1}{j} |\xi_j|^{-j/2}, \quad \forall j \in \mathbb{N},
		\]
		which implies that \eqref{DC} does not hold.
	\end{proof}
	
	Finally, we state an adaptation of the main result from \cite{ADL2023gh}. Some steps of the proof are omitted due to their similarity with those in the proof of Theorem~\ref{thm_MT}:
	
	\begin{theorem}
		Let $\boldsymbol{\omega}=(\omega_1, \dots, \omega_m) \in \mathsf{\Lambda}^1 C^\infty_{\partial M}(M)$ be a family of real-valued closed 1-forms on $M$. The operator 
		\begin{equation*}
			 \mathbb{L} = \mathrm{d}_t+\sum_{k=1}^{m}\omega_k\wedge\partial_{x_k}
		\end{equation*}
		is globally hypoelliptic if and only if $\boldsymbol{\omega}$ is neither rational nor Liouville.
	\end{theorem}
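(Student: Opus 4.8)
The plan is to mirror the structure of the proof of Theorem~\ref{thm_MT}, replacing the scalar Diophantine analysis of a single form by the matrix version encoded in $A(\boldsymbol{\omega})$, and to invoke the preceding proposition to translate the hypotheses on $\boldsymbol{\omega}$ into the Diophantine condition \eqref{DC} for $A(\boldsymbol{\omega})$. The equivalence is proved in two directions, both of which adapt the single-torus arguments essentially verbatim, the only genuinely new input being the bookkeeping associated with the cycle matrix.

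For necessity I would argue by contrapositive, constructing explicit obstructions. If $\boldsymbol{\omega}$ is rational, choose $\xi_0\in\mathbb{Z}^m\setminus\{0\}$ with $\xi_0\cdot\boldsymbol{\omega}$ integral, pick $\psi\in C^\infty(\hat M)$ with $\mathrm{d}\psi=\Pi^*(\xi_0\cdot\boldsymbol{\omega})$ so that $e^{i\psi}$ descends to $M$ by Proposition~\ref{uni_cov}, and set $u(t,x)=\sum_{n\in\mathbb{N}}e^{in\psi(t)}e^{-in\xi_0\cdot x}$. A direct computation, exactly as in Theorem~\ref{thm_MT}, gives $\mathbb{L}u=0$ while $u\notin C^\infty$, since the coefficients are bounded but do not decay. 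If $\boldsymbol{\omega}$ is Liouville, take the sequences $\{\theta_j\}$, $\{\xi_j\}$ from Definition~\ref{defi_liou_Tm}, choose $\psi_j\in C^\infty(\hat M)$ with $\mathrm{d}\psi_j=\Pi^*\theta_j$, and set $u(t,x)=\sum_{j\in\mathbb{N}}e^{i\psi_j(t)}e^{-i\xi_j\cdot x}$. Then $\mathbb{L}u=-\sum_j i(\xi_j\cdot\boldsymbol{\omega}-\theta_j)e^{i\psi_j}e^{-i\xi_j\cdot x}$, and the boundedness of $\{|\xi_j|^j(\xi_j\cdot\boldsymbol{\omega}-\theta_j)\}$ together with the Fa\`a di Bruno estimates for the derivatives of $e^{i\psi_j}$ — identical to the computation in Theorem~\ref{thm_MT} with $q_j$ replaced by $|\xi_j|$ — shows that the series converges in $C^\infty(M\times\mathbb{T}^m)$, so $\mathbb{L}u$ is smooth while $u$ is not. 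In both cases $\mathbb{L}$ fails to be globally hypoelliptic.

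For sufficiency, assume $\boldsymbol{\omega}$ is neither rational nor Liouville, so by the preceding proposition $A(\boldsymbol{\omega})$ satisfies \eqref{DC}. Let $u\in\mathscr{D}'(M\times\mathbb{T}^m)$ with $\mathbb{L}u=f$ smooth; taking partial Fourier coefficients gives $\mathrm{d}_t\widehat{u}_\xi+i(\xi\cdot\boldsymbol{\omega})\widehat{u}_\xi=\widehat{f}_\xi$ for each $\xi\in\mathbb{Z}^m$, and the elliptic-complex argument already yields $\widehat{u}_\xi\in C^\infty(M)$. Fix $\psi_k\in C^\infty(\hat M)$ with $\mathrm{d}\psi_k=\Pi^*\omega_k$ and set $\psi^\xi=\sum_k\xi_k\psi_k$, so $\mathrm{d}\psi^\xi=\Pi^*(\xi\cdot\boldsymbol{\omega})$. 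Lifting the cycles $\sigma_1,\dots,\sigma_d$ to paths $\hat\sigma_\ell$ with common initial point $Q_0$ and endpoints $Q_\ell$, integration of the pulled-back equation along $\hat\sigma_\ell$ gives, as in Theorem~\ref{thm_MT},
\[
\bigl(e^{i\psi^\xi(Q_\ell)}-e^{i\psi^\xi(Q_0)}\bigr)\widehat{u}_\xi(t_0)=\int_{Q_0}^{Q_\ell}e^{i\psi^\xi}\Pi^*\widehat{f}_\xi ,
\]
and since $\psi^\xi(Q_\ell)-\psi^\xi(Q_0)=\int_{\sigma_\ell}\xi\cdot\boldsymbol{\omega}=2\pi\,\xi\cdot a_\ell$, the modulus of the prefactor equals $|1-e^{2\pi i\xi\cdot a_\ell}|$, where $a_\ell$ is the $\ell$-th row of $A(\boldsymbol{\omega})$.

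Here lies the heart of the argument, and the main obstacle. In the multidimensional setting no single cycle controls the prefactor for all $\xi$: for each $\xi\neq 0$ I would instead choose $\ell=\ell(\xi)$ realizing the maximum in Lemma~\ref{lemma_dioph}, which — since $A(\boldsymbol{\omega})$ satisfies \eqref{DC} — converts into the bound \eqref{DC_equiv} and forces $|1-e^{2\pi i\xi\cdot a_{\ell(\xi)}}|\geq C(1+|\xi|)^{-N}$. Thus it is precisely the passage from the cohomological data $A(\boldsymbol{\omega})$ to a uniform polynomial lower bound that must be handled with care, and Lemma~\ref{lemma_dioph} supplies, for each $\xi$, a cycle on which the denominator is controlled. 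The numerator $\int_{Q_0}^{Q_\ell}e^{i\psi^\xi}\Pi^*\widehat{f}_\xi$ decays faster than any power of $|\xi|$ by the smoothness of $f$, estimated chart-by-chart along the compact path $\sigma_\ell$ exactly as in the final part of the proof of Theorem~\ref{thm_MT}. Dividing shows $\{\widehat{u}_\xi(t_0)\}$ is rapidly decreasing; the local representation of $\widehat{u}_\xi$ obtained by integrating from $t_0$ (whose oscillatory factor has modulus one) then propagates this decay to $\sup_{t\in V}|\widehat{u}_\xi(t)|\leq C(1+|\xi|)^{-N}$ on a chart $V$ about $t_0$, and Lemma~\ref{lemma_BCM} gives $u\in C^\infty(V\times\mathbb{T}^m)$. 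Covering $M$ by such charts yields $u\in C^\infty(M\times\mathbb{T}^m)$, so $\mathbb{L}$ is globally hypoelliptic. Once Lemma~\ref{lemma_dioph} is invoked, everything else is a routine adaptation of the single-torus computations.
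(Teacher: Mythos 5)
Your proposal is correct and follows essentially the same route as the paper's own proof: the same singular solutions $\sum_n e^{in\psi}e^{-in\xi_0\cdot x}$ and $\sum_j e^{i\psi_j}e^{-i\xi_j\cdot x}$ for the rational and Liouville cases, and for sufficiency the same reduction of the denominator bound to the cycle matrix $A(\boldsymbol{\omega})$ via Lemma~\ref{lemma_dioph}, with rapid decay of $\{\widehat{u}_\xi(t_0)\}$ fed into Lemma~\ref{lemma_BCM}. The only cosmetic difference is notational ($\psi^\xi=\xi\cdot\boldsymbol{\psi}$ versus the paper's $\boldsymbol{\psi}$), and your choice of $\ell=\ell(\xi)$ attaining the maximum is exactly the paper's step.
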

	
	\begin{proof}
		First, suppose that the family $\boldsymbol{\omega}$ is rational. Then, there exists $\xi_0\in\mathbb{Z}^m\setminus\{0\}$ such that $\xi_0\cdot\boldsymbol{\omega}$ is integral. Let $\Pi:\hat M\to M$ be the universal covering of $M$. By Proposition \ref{uni_cov}, there is $\psi\in C^\infty(\hat M)$ such that $\mathrm{d}\psi=\Pi^*(\xi_0\cdot\boldsymbol{\omega})$ and $e^{i\psi}$ is a smooth function on $M$. Define
		\[
		u=\sum_{j=1}^{\infty}e^{ij\psi(t)}e^{-ij\xi_0\cdot x}.
		\]
		
		It is clear that $u\in\mathscr{D}'(M\times\mathbb{T}^m)\setminus C^\infty (M\times\mathbb{T}^m)$ and $\mathbb{L}u=0$ as in the case $m=1$ (cf. Lemma \ref{lemma_BCM}). Then, $\mathbb{L}$ is not globally hypoelliptic.
		
		Now, suppose that $\boldsymbol{\omega}$ is Liouville. Let $\{\xi_j\}$ and $\{\theta_j\}$ be sequences as in Definition \ref{defi_liou_Tm}. For each $j\in\mathbb{N}$, let $\psi_j\in C^\infty(\hat M)$ be such that $\mathrm{d}\psi_j=\Pi^*(\theta_j)$. Again, by Proposition \ref{uni_cov}, each $e^{i\psi_j}$ is a smooth function on $M$. Define
		\[
		u=\sum_{j=1}^{\infty} e^{i\psi_j(t)}e^{-i\xi_j\cdot x}.
		\]
		
		Then, $u\in\mathscr{D}'(M\times\mathbb{T}^m)\setminus C^\infty (M\times\mathbb{T}^m)$. Let us show that $\mathbb{L}u$ is smooth. We have that
		\[
		\mathbb{L}u=-i\sum_{j=1}^{\infty}\left(\xi_j\cdot\boldsymbol{\omega}-\theta_j\right)e^{i\psi_j(t)}e^{-i\xi_j\cdot x}.
		\]
		
		The smoothness of $\mathbb{L}u$ follows from the fact that $\{|\xi_j|^j(\xi_j\cdot\boldsymbol{\omega}-\theta_j)\}$ is bounded. Therefore, given a small enough local chart $(V,t)$ on $M$, $K\subset V$ compact, and $\alpha\in\mathbb{N}_0^n$, there exists $C>0$ such that
		\[
		\sup_{t\in K}\left| \partial_t^\alpha\left(\xi_j\cdot\boldsymbol{\omega}-\theta_j\right) \right|\leq \dfrac{C}{|\xi_j|^j}.
		\]
		
		Arguing as in Theorem \ref{thm_MT}, we conclude that $\mathbb{L}u$ is smooth, which shows that $\mathbb{L}$ is not globally hypoelliptic.
		
		Finally, suppose that $\boldsymbol{\omega}$ is neither rational nor Liouville. Let $u\in\mathscr{D}'(M\times\mathbb{T}^m)$ be such that $\mathbb{L}u\in \mathsf{\Lambda}^1 C^\infty(M\times\mathbb{T}^m)$. Taking the Fourier series, for each $\xi\in\mathbb{Z}^m$, we obtain
		\[
		\mathrm{d}_t\widehat{u}_\xi+i(\xi\cdot\boldsymbol{\omega})\widehat{u}_\xi=\widehat{f}_\xi.
		\] 
		
		For each $k=1,\dots,m$, take $\psi_k\in C^\infty(\hat M)$ such that $\mathrm{d}\psi_k=\Pi^*(\omega_k)$. Fix $t_0\in M$ and let $(B,t)$ be a local chart on $M$ diffeomorphic to an open ball, small enough such that there exists $\hat B\subset \hat M$ for which $\Pi:\hat B\to B$ is a diffeomorphism. Then, as in Theorem \ref{thm_MT}, we have
		\[
		\mathrm{d}_t(e^{i\xi\cdot\boldsymbol{\phi}}\widehat{u}_\xi) =e^{i\xi\cdot\boldsymbol{\phi}} \widehat{f}_\xi,
		\]
	where $\boldsymbol{\phi}=(\psi_1\circ\Pi^{-1},\dots, \psi_m\circ\Pi^{-1})$. Integrating the expression above over the path $[t_0,t]$, by Stokes' Theorem, we obtain
	\[
	\widehat{u}_\xi(t)=e^{i\xi\cdot[\boldsymbol{\phi}(t_0)-\boldsymbol{\phi}(t)]}\widehat{u}_\xi(t_0) + e^{-i\xi\cdot\boldsymbol{\phi}(t_0)}\int_{t_0}^{t} e^{i\xi\cdot\boldsymbol{\phi}}\widehat{f}_\xi.
	\]
	
	In view of Lemma \ref{lemma_BCM}, due to the smoothness of $f$ and the fact that $\boldsymbol{\phi}$ is real-valued, it is enough to show that $\{\widehat{u}_\xi(t_0)\}$ is rapidly decreasing.
	
	Recall that
	\begin{equation}\label{aaaaa} 
		\mathrm{d}(e^{i\xi\cdot\boldsymbol{\psi}}\Pi^*(\widehat{u}_\xi))=e^{i\xi\cdot\boldsymbol{\psi}}\Pi^*(\widehat{f}_\xi),
	\end{equation}
	where $\boldsymbol{\psi}=(\psi_1,\dots,\psi_m)$. For each $\ell=1,\dots,d$, let $\sigma_\ell:[0,1]\to M$ be paths whose homology classes generate a subspace of $H_1(M;\mathbb{R})$ dual to $H_{\partial M}^1(M)$, as in the previous section. By Hurewicz's Theorem, we can choose the paths to be smooth and based at $t_0$. Also, let $\hat\sigma_\ell:[0,1]\to\hat M$ be liftings of each $\sigma_\ell$ such that $\hat\sigma_\ell(0)=Q_0$ for all $\ell=1,\dots,d$, for some $Q_0$. Let us denote $Q_\ell=\hat\sigma_\ell(1)$.
	
	Integrating \eqref{aaaaa} over each $\hat\sigma_\ell$, we obtain
	\[
	\widehat{u}_\xi(t_0)=e^{i\xi\cdot(\boldsymbol{\psi}(Q_\ell)-\boldsymbol{\psi}(Q_0))}\widehat{u}_\xi(t_0)+e^{-i\xi\cdot\boldsymbol{\psi}(Q_0)}\int_{Q_0}^{Q_\ell}e^{i\xi\cdot\boldsymbol{\psi}}\Pi^{*}\widehat{f}_\xi,\quad\ell=1,\dots,d.
	\]
	
	Now, notice that
	\[
	\psi_k(Q_\ell)-\psi_k(Q_0) = \int_{\hat\sigma_\ell}\mathrm{d}\psi_k = \int_{\hat\sigma_\ell}\Pi^*\omega_k = \int_{\sigma_\ell}\omega_k = 2\pi A(\boldsymbol{\omega})_{\ell k},
	\]
	for \( \ell=1,\dots,d,\) and \(\ k=1,\dots, m.\)
	
	Then,
	\[
	A(\boldsymbol{\omega})\xi = \dfrac{1}{2\pi}\big(\xi\cdot[\boldsymbol{\psi}(Q_1)-\boldsymbol{\psi}(Q_0)],\dots,\xi\cdot[\boldsymbol{\psi}(Q_d)-\boldsymbol{\psi}(Q_0)]\big).
	\]
	
	It follows from Lemma \ref{lemma_dioph} that there exist $N\in\mathbb{N}$ and $C>0$ such that
	\[
	\max_{1\leq\ell\leq d}|1-e^{i\xi\cdot[\boldsymbol{\psi}(Q_0)-\boldsymbol{\psi}(Q_\ell)]}| = \max_{1\leq\ell\leq d}|1-e^{-2\pi i(A(\boldsymbol{\omega})\xi)_\ell}| \geq C(1+|\xi|)^{-N},
	\]
	for every \( \xi\in\mathbb{Z}^m\setminus\{0\}. \)
	
	Choosing $\ell\in\{1,\dots,d\}$ such that the maximum is attained (and consequently nonzero), we have
	\[
	\widehat{u}_\xi(t_0) = \dfrac{-e^{-i\xi\cdot\boldsymbol{\psi}(Q_\ell)}}{e^{i\xi\cdot[\boldsymbol{\psi}(Q_0)-\boldsymbol{\psi}(Q_\ell)]}-1}\int_{Q_0}^{Q_\ell}e^{i\xi\cdot\boldsymbol{\psi}}\Pi^*\widehat{f}_\xi.
	\]
	
	Then,
	\[
	|\widehat{u}_\xi(t_0)| = \dfrac{1}{\left|1-e^{i\xi\cdot[\boldsymbol{\psi}(Q_0)-\boldsymbol{\psi}(Q_\ell)]}\right|}\left|\int_{Q_0}^{Q_\ell}e^{i\xi\cdot\boldsymbol{\psi}}\Pi^*\widehat{f}_\xi\right| \leq C(1+|\xi|)^{N}\left|\int_{Q_0}^{Q_\ell}e^{i\xi\cdot\boldsymbol{\psi}}\Pi^*\widehat{f}_\xi\right|.
	\]
	
	We conclude that $\{\widehat{u}_\xi(t_0)\}$ is rapidly decreasing due to the smoothness of $f$ and the fact that $\boldsymbol{\psi}$ is real, which finishes the proof.
	\end{proof}

%====================================	
%====================================	
	\section{Final Remarks: $(a,b)$-Metrics and Related Geometric Structures}
%====================================	
%====================================	

	Let $\overline{M}$ be a $n$-dimensional smooth compact manifold with boundary, and $x$ be a boundary defining function. We say that $g$ is an $(a,b)$-metric on the interior $M$ of $\overline{M}$ if $g$ is of the form
	\[
	g = \frac{\mathrm{d}x^2}{x^{2a+2}}+\frac{g'}{x^{2b}}
	\]
	near the boundary, with $a,b\geq 0$. 
	
	The scattering metric considered in the previous sections is then a $(1,1)$-metric. When $(a,b)=(0,0)$, the metric $g$ is called a $b$-metric. In this case, Melrose showed in \cite{Melrose_GST} that $\mathcal{H}^k(M)$ is isomorphic to the image of the inclusion $i:H^k(\overline{M},\partial M)\to H^k(\overline{M})$, as in the case of scattering metrics.
	
	We note that $b$-metrics arise naturally on manifolds with cylindrical ends. Indeed, under a suitable  change of variables, a $b$-metric becomes asymptotically equivalent to a product metric on $\mathbb{R} \times \partial M$, justifying the interpretation of the corresponding non-compact manifold as having cylindrical ends.

	More generally, for $(a,b)$-metrics with $a>b>0$, we have the following result due to Shapiro \cite{shapiro}:
	
	\begin{theorem}[Shapiro]\label{thm:Shapiro}
		Let $M$ be a smooth compact manifold with boundary endowed with an $(a,b)$-metric, with $a\geq b\geq 0$. 
		\begin{enumerate}
			\item If $b=0$, then $\mathcal{H}^k(M)$ is isomorphic to the image of 
		\[i:H^k(\overline{M},\partial M)\to H^k(\overline{M})\] 
		
		\item If $b>0$, then
		\[
		\mathcal{H}^k(M)\simeq \begin{cases}
				H^k(\overline{M},\partial M), &\text{if } k < (n+1 - a/b)/2,\\
				\operatorname{Im}(i:H^k(\overline{M},\partial M)\to H^k(\overline{M})), &\text{if } k = (n+1 - a/b)/2,\\
				H^k(\overline{M}), &\text{if } k > (n+1 - a/b)/2.
			\end{cases}
		\]
				\end{enumerate}
		
	\end{theorem}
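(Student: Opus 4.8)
The statement is a Hodge-theoretic computation of the space $\mathcal{H}^k(M)$ of $L^2$ harmonic $k$-forms for the complete metric $g$, so the plan is to convert the problem into $L^2$-cohomology and then read the answer off from the asymptotic (indicial) behaviour of harmonic forms at $\partial M$. First I would record that, since $\int_0 x^{-(a+1)}\,\mathrm{d}x=\infty$ for $a\geq 0$, the metric $g$ is complete; consequently every $u\in\mathsf{\Lambda}^kL^2(M)$ with $\Delta u=0$ satisfies $\mathrm{d}u=\delta u=0$ (the integration by parts is justified because $C_c^\infty$ is a form core for $\Delta$), and by interior elliptic regularity $u$ is smooth. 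Thus $\mathcal{H}^k(M)$ coincides with the space of closed and coclosed $L^2$ $k$-forms, and the task becomes to identify this space with the appropriate piece of de Rham cohomology. The case $b=0$ is exactly the $b$-metric situation of Atiyah--Patodi--Singer and Melrose, so item (1) may be quoted from \cite{APS1975,Melrose_GST}; the work is in item (2).

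For $b>0$ I would localise near the boundary. In a collar $(0,1)_x\times\partial M$ the Riemannian density is $x^{-(a+1)-b(n-1)}\,\mathrm{d}x\,\mathrm{d}V_{g'}$, a tangential $k$-form $\alpha$ has pointwise norm $|\alpha|_g=x^{bk}|\alpha|_{g'}$, and a normal form $\mathrm{d}x\wedge\beta$ (with $\beta$ a boundary $(k-1)$-form) has $|\mathrm{d}x\wedge\beta|_g=x^{a+1}x^{b(k-1)}|\beta|_{g'}$. Expanding forms in eigenforms of the Hodge Laplacian of $(\partial M,g')$ turns $\Delta u=0$ into a family of regular--singular radial ODEs, one for each boundary eigenvalue; because the cross-section is rescaled by $g'/x^{2b}$ these ODEs are $x$-dependent, and an indicial-root analysis is needed to decide which solutions lie in $L^2$. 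The outcome is that $\mathcal{H}^k(M)$ is controlled by the boundary-harmonic modes $\eta\in\mathcal{H}^k(\partial M)$ (tangential) and $\zeta\in\mathcal{H}^{k-1}(\partial M)$ (normal).

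The decisive computation is the $L^2$ test on the two model solutions. The normal solution is $x^{-R}\,\mathrm{d}x\wedge\zeta$ with $R=a+1+b(2k-n-1)$; it is closed and coclosed and has vanishing pullback to $\partial M$, hence represents a \emph{relative} class, and it lies in $L^2$ iff $\int_0 x^{-R}\,\mathrm{d}x<\infty$, i.e. iff $R<1$. A direct simplification gives $R<1\iff k<(n+1-a/b)/2$, which is exactly the critical degree in the statement; this is the heart of the matter. The tangential solution is $\eta$ itself (constant in $x$, closed and coclosed, restricting nontrivially to $\partial M$ and hence representing an \emph{absolute} class), whose $L^2$-integrability is governed by the complementary indicial root. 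Below the threshold the admissible behaviour is the relative one, forcing $\mathcal{H}^k(M)\cong H^k(\overline{M},\partial M)$; the complementary regime is then handled by the same analysis together with Poincaré--Lefschetz duality $H^k(\overline{M},\partial M)\cong H^{n-k}(\overline{M})$, which is compatible with the isometry $\star\colon\mathcal{H}^k(M)\to\mathcal{H}^{n-k}(M)$; and at $k=(n+1-a/b)/2$ the borderline root forces the surviving classes to be representable simultaneously by a relative and an absolute $L^2$ form, i.e. to lie in $\operatorname{Im}(i)$.

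To finish I would promote this local dichotomy to a global statement by comparing the weighted $L^2$ de Rham complex with the smooth-up-to-boundary complex computing $H^k(\overline{M})$ and with the vanishing-tangential complex computing $H^k(\overline{M},\partial M)$, exactly as the inclusions $\mathsf{\Lambda}^\bullet C_c^\infty(M)\hookrightarrow\mathsf{\Lambda}^\bullet B(M)\hookrightarrow\mathsf{\Lambda}^\bullet A(M)$ were used in \S\ref{ssec_rel_coh}, combined with the long exact sequence of the pair $(\overline{M},\partial M)$. I expect the main obstacle to be analytic rather than topological: establishing that the $L^2$-Laplacian has closed range, so that the Hodge--Kodaira decomposition holds and $\mathcal{H}^k(M)$ genuinely computes $L^2$-cohomology, requires a Fredholm parametrix for $\Delta$ in a pseudodifferential calculus adapted to $(a,b)$-metrics (a cusp/fibred-boundary refinement of Melrose's $b$-calculus). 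The delicate points are precisely the treatment of the nonzero boundary eigenvalues, whose radial ODEs carry the $x$-dependent potential $x^{2b}\lambda$ responsible for the appearance of the ratio $a/b$, and the borderline degree $k=(n+1-a/b)/2$, where an indicial root sits on the $L^2$ line and the identification with $\operatorname{Im}(i)$ must be made by hand.
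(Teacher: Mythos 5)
First, note that the paper does not actually prove Theorem \ref{thm:Shapiro}: it is stated in the final remarks as a quotation from \cite{shapiro}, so there is no internal proof to compare yours against, and your proposal must be judged on its own merits. Its architecture (completeness plus Gaffney to get closed-and-coclosed, separation into boundary modes, indicial roots, weighted $L^2$ tests, a parametrix to obtain closed range) is indeed the standard route for results of this type, and your central computation is correct: the normal model $x^{-R}\,\mathrm{d}x\wedge\zeta$ with $R=a+1+b(2k-n-1)$ is closed and coclosed for $\zeta$ harmonic on $\partial M$, and it is square-integrable iff $R<1$, i.e.\ iff $k<(n+1-a/b)/2$.

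The genuine gap is in the sentence that handles ``the complementary regime \dots\ by the same analysis together with Poincar\'e--Lefschetz duality''. Carry out the tangential computation you left implicit: for $\eta$ harmonic on $\partial M$, extended constantly in $x$, one has $|\eta|_g^2=x^{2bk}|\eta|_{g'}^2$ and $\mathrm{d}V_g=x^{-(a+1)-b(n-1)}\,\mathrm{d}x\,\mathrm{d}V_{g'}$, so $\eta\in L^2$ iff $2bk-(a+1)-b(n-1)>-1$, i.e.\ iff $k>(n-1+a/b)/2$; and this is exactly what your duality step produces as well, since $\star$ sends degree-$k$ normal modes to degree-$(n-k)$ tangential ones and $k<(n+1-a/b)/2\iff n-k>(n-1+a/b)/2$. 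Your argument therefore yields the relative identification for $k<(n+1-a/b)/2$ and the absolute one only for $k>(n-1+a/b)/2$; these thresholds coincide iff $a=b$. When $a>b$ there is a whole band of degrees $(n+1-a/b)/2\le k\le(n-1+a/b)/2$ about which your proposal says nothing, while the statement you are trying to prove asserts $\mathcal{H}^k(M)\simeq H^k(\overline{M})$ on most of that band. Worse, on the band the assertion is false, so no repair can recover it: the band is symmetric under $k\mapsto n-k$, so the claim combined with $\star$-duality and Lefschetz duality would force $H^k(\overline{M})\simeq H^{n-k}(\overline{M})\simeq H^k(\overline{M},\partial M)$ there; concretely, for the disk ($n=2$) with $a/b>3$ the claim gives $\mathcal{H}^0(M)\simeq H^0(\overline{M})=\mathbb{R}$, whereas your own Gaffney step shows that an $L^2$ harmonic function on this complete, infinite-volume manifold is an $L^2$ constant, hence $0$. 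What your method can establish (after the deferred Fredholm and nonzero-eigenvalue analysis, which remains substantial) is the corrected statement with $\operatorname{Im}\bigl(i:H^k(\overline{M},\partial M)\to H^k(\overline{M})\bigr)$ on the closed band and $H^k(\overline{M})$ only for $k>(n-1+a/b)/2$; this agrees with the theorem as quoted precisely in the case $a=b$ relevant to the body of the paper, and indicates that the quoted inequalities should be read with two distinct thresholds.
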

	
	Another important class of manifolds is that of conformally compact metrics, which correspond to the case $(a,b) = (0,1)$. These metrics naturally arise in the study of hyperbolic spaces. For this class, Mazzeo proved the following result in \cite{mazzeo}:

\begin{theorem}[Mazzeo]\label{thm:Mazzeo}
	Let $\overline{M}$ be a smooth compact manifold with boundary endowed with a conformally compact metric. Then
	\[
	\mathcal{H}^k(M)\simeq \begin{cases}
			H^{k}(\overline{M},\partial M), &\text{if } k<(n-1)/2,\\
			H^k(\overline{M}), &\text{if } k>(n+1)/2,
		\end{cases}
	\]
	and $\mathcal{H}^k(M)$ is infinite-dimensional for $k = n/2$.
\end{theorem}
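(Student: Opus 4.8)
The plan is to follow Mazzeo's microlocal approach \cite{mazzeo} via the $0$-calculus, treating $\mathcal{H}^k(M)$ as the $L^2$-kernel of the Hodge Laplacian $\Delta_k = \mathrm{d}\,\mathrm{d}^* + \mathrm{d}^*\mathrm{d}$ and identifying it with de Rham data through boundary asymptotics. First I would record that a conformally compact metric is complete, so the Gaffney--Andreotti--Vesentini integration-by-parts argument applies: every $u \in \mathsf{\Lambda}^k L^2(M)$ with $\Delta_k u = 0$ satisfies $\mathrm{d}u = \mathrm{d}^* u = 0$, and by elliptic regularity $u$ is smooth. Thus each harmonic form is closed and defines a de Rham class; the whole content of the theorem is to determine in which cohomology --- the relative $H^k(\overline{M},\partial M)$ or the absolute $H^k(\overline{M})$ --- this class naturally lives, as a function of the degree $k$ relative to the middle dimension $n/2$.

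The core analytic step is the near-boundary normal form. Writing $g = x^{-2}(\mathrm{d}x^2 + g'(x,y))$ in a collar $(0,1)\times\partial M$ and passing to the $0$-tangent structure spanned by $x\partial_x, x\partial_{y_j}$, the operator $\Delta_k$ becomes an elliptic $0$-differential operator, and I would compute its indicial operator $I(\Delta_k)(\zeta)$ by substituting $x^\zeta$ times a $\partial M$-form and linearizing at $x=0$. Using the tangential--normal splitting $\omega = \alpha + \tfrac{\mathrm{d}x}{x}\wedge\beta$ from the relative-cohomology subsection, the indicial roots come out as explicit affine functions of $\tfrac{n-1}{2}-k$; comparing them against the $L^2$ threshold --- fixed by the volume density $x^{-n}\,\mathrm{d}x\,\mathrm{d}V_{g'}$ together with the factor $x^{k}$ by which $|\alpha|_g$ differs from $|\alpha|_{g'}$ --- shows that for $k \neq n/2$ exactly one indicial root is $L^2$-admissible, so a harmonic form has a single leading asymptotic and is either normal-type (fast-decaying) or tangential-type. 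This dichotomy is precisely what separates relative from absolute classes.

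With the indicial analysis in hand, I would invoke the Mazzeo--Melrose construction of a generalized inverse $G_k$ for $\Delta_k$ in the $0$-calculus, yielding the Hodge-type decomposition $\mathsf{\Lambda}^k L^2(M) = \mathcal{H}^k(M) \oplus \overline{\operatorname{im}\mathrm{d}} \oplus \overline{\operatorname{im}\mathrm{d}^*}$ together with precise conormal asymptotics for $G_k$. For $k < (n-1)/2$ the admissible leading exponent forces the conormal part of every harmonic representative to vanish to high order, so the class is compactly-supported-cohomologous, giving $\mathcal{H}^k(M)\simeq H^k(\overline{M},\partial M)$; surjectivity follows by solving $\Delta_k u =$ (a smooth closed and coclosed representative) with the decaying Green operator and correcting by an exact term. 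For $k > (n+1)/2$ the admissible root instead allows the form to restrict to a nonzero boundary value, identifying the class with $H^k(\overline{M})$; the Hodge star $\star:\mathcal{H}^k(M)\to\mathcal{H}^{n-k}(M)$ then cross-checks the two ranges against one another, consistently with the duality $H^k(\overline{M},\partial M)\simeq H^{n-k}(\overline{M})$.

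The main obstacle is the middle degree $k = n/2$. Here the two candidate indicial roots collide exactly at the $L^2$ threshold, so $\Delta_{n/2}$ has a resonance at the bottom of the spectrum and $0$ fails to be isolated: the continuous spectrum reaches down to it. Establishing that $\mathcal{H}^{n/2}(M)$ is then infinite-dimensional is the delicate part, and I expect it to require the full strength of the $0$-calculus: one analyzes the indicial family at the double root, constructs a Poisson-type operator producing an $L^2$ harmonic form from borderline boundary data --- the logarithmically-improved decay at the coincident root is what keeps these forms square-integrable --- and shows the resulting boundary pairing has infinite-dimensional range. Completeness of the metric and the mapping properties of $G_{n/2}$ are what make this argument go through, and pinning down the exceptional asymptotics exactly, rather than the routine bookkeeping away from the middle, is where the real work lies.
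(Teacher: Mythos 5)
This theorem is never proved in the paper: it appears in the final-remarks section as a quoted result of Mazzeo \cite{mazzeo}, used only to motivate the claim that the preceding results might extend to other $(a,b)$-metrics. So there is no internal proof to compare against, and the right benchmark is Mazzeo's original argument --- which is indeed the $0$-calculus proof you outline. As a roadmap, your proposal is faithful to that argument: completeness of a conformally compact metric plus the Gaffney cutoff argument gives that $L^2$ harmonic forms are closed and coclosed; the Hodge Laplacian is an elliptic $0$-differential operator; the indicial roots, measured against the $L^2$ threshold determined by the density $x^{-n}\,\mathrm{d}x\,\mathrm{d}V_{g'}$ and the rescaling of pointwise norms, produce the relative/absolute dichotomy away from the middle degree; and the Mazzeo--Melrose generalized inverse converts this into the stated isomorphisms, with the Hodge star providing the duality cross-check.

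As a proof, however, the proposal has genuine gaps. Every step carrying real content --- the computation of the indicial roots of $\Delta_k$, the mapping properties of the generalized inverse $G_k$, and the argument that the admissible asymptotics actually implement the isomorphisms with $H^k(\overline{M},\partial M)$ and $H^k(\overline{M})$ --- is invoked rather than carried out, so what you have is a plan, not a proof. More importantly, your treatment of the middle degree is both speculative and off-target. The infinite-dimensionality of $\mathcal{H}^{n/2}(M)$ does not require a resonance analysis at a coincident indicial root or a Poisson-operator boundary pairing; the standard argument is conformal invariance. In degree $k=n/2$ both the $L^2$ norm on $k$-forms and the Hodge star restricted to $k$-forms are conformally invariant, so the system $\mathrm{d}u=0$, $\mathrm{d}(\star u)=0$, $u\in L^2$, is the same for $g=x^{-2}\bar g$ as for the incomplete metric $\bar g = x^2 g$, which is smooth up to $\partial M$; and the space of $L^2$ solutions of this first-order elliptic system on the interior of a compact manifold with boundary, with no boundary condition imposed, is infinite-dimensional. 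Your proposed route would instead have to control asymptotics at an honest spectral threshold (where the continuous spectrum reaches $0$), which is substantially harder than what is needed and is not obviously completable as written.
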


In view of Theorems \ref{thm:Shapiro} and \ref{thm:Mazzeo}, the results presented in the previous sections could be proved for a broader class of metrics 
on the interior of a compact manifold with boundary, provided that suitable versions of the Hodge theorem hold true in such situations. 
In the case of conformally compact metrics, one might assume $\dim M = n > 4$ to avoid the critical degree $k = n/2$.

\bibliographystyle{plain}
\bibliography{references}
	
\end{document}